\newtheorem{thm}{Theorem}[section]
\newtheorem*{thm*}{Theorem}
\newtheorem{cor}[thm]{Corollary}
\newtheorem{lem}[thm]{Lemma}
\newtheorem{prop}[thm]{Proposition}
\theoremstyle{definition}
\newtheorem{conv}[thm]{Convention}
\newtheorem{dfn}[thm]{Definition}
\newtheorem*{dfn*}{Definition}
\newtheorem{rem}[thm]{Remark}
\newtheorem*{conj*}{Conjecture}
\newtheorem{ex}[thm]{Example}
\theoremstyle{remark}
\newtheorem*{ac}{Acknowledgments}
\newtheorem*{claim*}{Claim}
\numberwithin{equation}{thm}
\def\PP{\mathbb{P}}
\def\ZZ{\mathbb{Z}}
\def\cD{\mathcal{D}}
\def\cE{\mathcal{E}}
\def\cF{\mathcal{F}}
\def\cG{\mathcal{G}}
\def\cK{\mathcal{K}}
\def\cI{\mathcal{I}}
\def\cO{\mathcal{O}}
\def\cP{\mathcal{P}}
\def\cQ{\mathcal{Q}}
\def\cS{\mathcal{S}}
\def\cT{\mathcal{T}}
\def\cX{\mathcal{X}}
\def\cY{\mathcal{Y}}
\def\fm{\mathfrak{m}}
\def\fp{\mathfrak{p}}
\def\sZ{\mathsf{Z}}
\def\rS{\mathrm{S}}
\def\Hom{\operatorname{Hom}}
\def\ltensor{\otimes^{\bf{L}}}
\def\Sing{\operatorname{\mathrm{Sing}}}
\def\sup{\operatorname{\mathrm{Supp}}}
\def\ssup{\operatorname{\underline{\mathrm{Supp}}}}
\def\Spcl{\operatorname{\mathbf{Spcl}}}
\def\Spec{\operatorname{\mathrm{Spec}}}
\def\Th{\operatorname{\mathbf{Th}}}
\def\Thom{\operatorname{\mathbf{Thom}}}
\def\Param{\operatorname{\mathbf{Param}}}
\def\tsp{\operatorname{\mathrm{Spec}_{\triangle}}}
\def\tsup{\mathrm{Supp}_{\triangle}}
\def\tRad{\operatorname{\mathbf{Rad}_{\triangle}}}
\def\ttsp{\operatorname{\mathrm{Spec}_{\otimes}}}
\def\ttsup{\mathrm{Supp}_{\otimes}}
\def\kb{\mathrm{K^b}}
\def\dpf{\mathrm{D^{perf}}}
\def\ds{\mathrm{D^{sg}}}
\def\db{\mathrm{D^{b}}}
\def\codim{\operatorname{\mathrm{codim}}}
\def\tpf{\mathcal{S}^{\mathrm{perf}}}
\def\tb{\mathcal{S}^{\mathrm{b}}}
\def\ts{\mathcal{S}^{\mathrm{sg}}}
\def\ol{\overline}
\def\one{\mathbf{1}}
\begin{document}
\title[spectra of triangulated categories]{Prime thick subcategories and spectra of derived and singularity categories of noetherian schemes}
\author{Hiroki Matsui}
\address{Graduate School of Mathematical Sciences\\ University of Tokyo, 3-8-1 Komaba, Meguro-ku, Tokyo 153-8914, Japan}
\email{mhiroki@ms.u-tokyo.ac.jp}
\thanks{2020 {\em Mathematics Subject Classification.} 13D09, 13H10, 14J60, 18E30}
\thanks{{\em Key words and phrases.} complete intersection, derived category, hypersurface, noetherian scheme, prime thick subcategory, singularity category, spectrum, triangulated category}
\thanks{The author was partly supported by JSPS Grant-in-Aid for JSPS Fellows 19J00158.}

\begin{abstract}
For an essentially small triangulated category $\cT$, we introduce the notion of prime thick subcategories and define the spectrum of $\cT$, which shares the basic properties with the spectrum of a tensor triangulated category introduced by Balmer. 
We mainly focus on triangulated categories that appear in algebraic geometry such as the derived and the singularity categories of a noetherian scheme $X$.
We prove that certain classes of thick subcategories are prime thick subcategories of these triangulated categories.
Furthermore, we use this result to show that certain subspaces of $X$ are embedded into their spectra as topological spaces.
\end{abstract}

\maketitle

\section{Introduction}
Classification of thick subcategories is one of the important approaches for understanding the structure of a given triangulated category and is a common problem in various fields of mathematics, such as commutative algebra \cite{Hop, Nee, Ste14, Tak10}, algebraic geometry \cite{Ste14, Tho}, modular representation theory of finite groups \cite{BCR, BIK, BIKP}, stable homotopy theory \cite{Hop, HS}, and so on.
Especially, classifications of thick tensor ideals of tensor triangulated categories are quite successful.
For example, the thick tensor ideals of the perfect derived category of a noetherian scheme, those of the stable module category, and those of the stable homotopy category have been classified; see the references above.
Recently, Balmer \cite{Bal05} has established a theory called {\it tensor triangular geometry}, which gives a unified perspective on such classifications.
For a given essentially small tensor triangulated category $(\cT, \otimes, \one)$, Balmer defines a topology on the set $\ttsp(\cT)$ of prime thick tensor ideals of $\cT$, which is called the {\it Balmer spectrum} of $\cT$.
Balmer establishes the following monumental work in the theory.

\begin{thm}[{Balmer}]\label{thm0}
Let $\cT$ be an essentially small tensor triangulated category.
Then there is a lattice isomorphism between the set of radical thick tensor ideals of	$\cT$ and the set of Thomason subsets of $\ttsp(\cT)$.
\end{thm}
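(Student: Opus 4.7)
The plan is to construct the lattice isomorphism explicitly via the support map and its Thomason-complement, and to verify bijectivity by carefully mimicking the classical Zariski/Hochster picture from commutative algebra. For each object $a \in \cT$, define the support $\ttsup(a) = \{\fp \in \ttsp(\cT) : a \notin \fp\}$. The complements of these sets form the basis of closed subsets of the topology on $\ttsp(\cT)$, so each $\ttsup(a)$ is by definition a closed subset with quasi-compact complement, making any union $\bigcup_{a \in \cI} \ttsup(a)$ a Thomason subset. This yields the forward map $\sigma(\cI) := \bigcup_{a \in \cI} \ttsup(a)$. In the other direction, for a Thomason subset $Y \subseteq \ttsp(\cT)$, I would define $\cT_Y := \{a \in \cT : \ttsup(a) \subseteq Y\}$.

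The first step is to check that these two assignments are order-preserving and have the right targets. The inclusion $\ttsup(a \oplus b) = \ttsup(a) \cup \ttsup(b)$ together with the triangle-invariance $\ttsup(\Sigma a) = \ttsup(a)$ and the two-out-of-three $\ttsup(b) \subseteq \ttsup(a) \cup \ttsup(c)$ for a triangle $a \to b \to c \to \Sigma a$ (which is built directly from the definition of a prime ideal) show that $\cT_Y$ is a thick subcategory. The tensor-ideal property $\ttsup(a \otimes b) = \ttsup(a) \cap \ttsup(b)$ then shows $\cT_Y$ is a thick tensor ideal, and radicality $\sqrt{\cT_Y} = \cT_Y$ follows from the fact that $\ttsup(a^{\otimes n}) = \ttsup(a)$. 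Showing $\sigma(\cI)$ is Thomason uses that each $\ttsup(a)$ is closed with quasi-compact complement.

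The heart of the proof is verifying that the two assignments are mutually inverse. The direction $\sigma(\cT_Y) = Y$ for Thomason $Y$ should follow from the observation that $Y$ is a union of basic closed sets $\ttsup(a_i)$ and that each such $a_i$ lies in $\cT_Y$ by construction. The reverse containment $\sigma(\cT_Y) \subseteq Y$ is immediate. The genuinely hard direction is $\cT_{\sigma(\cI)} = \cI$ for a radical thick tensor ideal $\cI$. The inclusion $\cI \subseteq \cT_{\sigma(\cI)}$ is tautological; the key content is the reverse $\cT_{\sigma(\cI)} \subseteq \cI$, which amounts to the statement that $\sqrt{\cI} = \bigcap_{\fp \supseteq \cI} \fp$. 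This is the step I expect to be the main obstacle.

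To prove this ``prime avoidance''-type statement, I would argue by contraposition: given $a \notin \cI = \sqrt{\cI}$, I need to produce a prime thick tensor ideal $\fp \supseteq \cI$ with $a \notin \fp$. The strategy is a Zorn's lemma argument applied to the collection of thick tensor ideals containing $\cI$ and disjoint from the multiplicatively closed family $\{a^{\otimes n} : n \geq 0\}$; the hypothesis $a \notin \sqrt{\cI}$ guarantees this family is nonempty (it contains $\cI$). A maximal element $\fp$ of this family exists, and the main calculation is to verify $\fp$ is prime, i.e., that $b \otimes c \in \fp$ forces $b \in \fp$ or $c \in \fp$. If both failed, the thick tensor ideals generated by $\fp \cup \{b\}$ and $\fp \cup \{c\}$ would both strictly contain $\fp$, hence would each contain some power of $a$ by maximality; tensoring these relations together and using thickness would place a power of $a$ inside $\fp$, a contradiction. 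This yields the desired prime, and completes the bijection. Finally, both maps preserve intersections and sums on the nose, so the bijection is a lattice isomorphism.
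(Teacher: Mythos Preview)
The paper does not actually prove this theorem; it is stated in the introduction as Balmer's result and restated in Section~4 as Theorem~\ref{bal} with a citation to \cite[Theorem~4.10]{Bal05}, but no argument is supplied. Your sketch is a faithful outline of Balmer's original proof, so there is nothing in the paper to compare it against beyond the bare citation.

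Two points in your outline deserve more care, though neither is fatal. First, the assertion that each $\ttsup(a)$ has quasi-compact complement is not ``by definition''; it requires a short argument (this is \cite[Proposition~2.14(a)]{Bal05}). Second, and more substantively, the step $\sigma(\cT_Y)=Y$ relies on knowing that \emph{every} closed subset of $\ttsp(\cT)$ with quasi-compact complement is of the form $\ttsup(a)$ for some object $a$. This is \cite[Proposition~2.14(b)]{Bal05} and is not immediate from the definition of the topology, since the closed sets $\sZ(\cE)$ are arbitrary \emph{intersections} of the $\ttsup(a)$, not unions. Once these two facts are in hand, your Zorn's-lemma argument for $\sqrt{\cI}=\bigcap_{\fp\supseteq\cI}\fp$ (which is precisely Proposition~\ref{int} in the present paper, again only cited) completes the bijection exactly as you describe.
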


\noindent
By virtue of this result, tensor triangular geometry provides us an algebro-geometric way to study an essentially small tensor triangulated category using its Balmer spectrum.

Tensor triangular geometry can be applied to any essentially small tensor triangulated categories, whereas it cannot be directly applied to triangulated categories that are not tensor triangulated.
Such triangulated categories include important ones, e.g., the bounded derived category $\db(X)$ of coherent sheaves and the singularity category $\ds(X)$ of a noetherian scheme $X$ do not have natural tensor triangulated structures, which are used in the study of birational geometry and homological mirror symmetry conjecture; see \cite{Kaw, Orl04} and references therein.
Therefore it is a natural and important problem to develop an analogous theory of tensor triangular geometry for essentially small triangulated categories without tensor triangulated structures.

The present paper aims to construct for a given essentially small triangulated category $\cT$ a topological space $\tsp(\cT)$ which we call the {\it spectrum} of $\cT$.
To do this, we define the notion of {\it prime thick subcategories} of $\cT$ and the spectrum of $\cT$ as the set of prime thick subcategories together with a topology given in \cite{MT1}.
Moreover, we study the prime thick subcategories and spectra of the perfect derived category $\dpf(X)$, the bounded derived category $\db(X)$ of coherent sheaves, and the singularity category $\ds(X)$ of a noetherian scheme $X$.
One of the main theorems of this paper is the following, which means that the spectra contain some topological information on $X$.

\begin{thm}[Corollaries \ref{cor} and \ref{cor2}]\label{main2}
Let $X$ be a noetherian scheme.
\begin{enumerate}[\rm(1)]
\item
There is an immersion
$$
X \hookrightarrow \tsp(\dpf(X))
$$
of topological spaces, which is a homeomorphism if $X$ is quasi-affine.
\item
There is an immersion
$$
\mathrm{CI}(X) \hookrightarrow \tsp(\db(X))
$$
of topological spaces, which is a homeomorphism if $X$ is quasi-affine and regular.
\item
Assume that $X$ is a separated Gorenstein scheme.
Then there is an immersion
$$
\mathrm{HS}(X) \hookrightarrow \tsp(\ds(X))
$$
of topological spaces, which is a homeomorphism if $X$ is quasi-affine and locally a hypersurface.
\end{enumerate}	
\end{thm}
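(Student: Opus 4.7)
The plan is to construct, in each of the three cases, an explicit map sending a point to a thick subcategory defined by localization at that point, and then to verify that this map lands in the spectrum, is continuous and injective, and is open onto its image. For part (1), I would send $x\in X$ to the subcategory
\[
\cP_x := \{\, E\in \dpf(X) \mid E_x \simeq 0 \text{ in } \dpf(\cO_{X,x})\,\}.
\]
For part (2), a complete intersection point $x\in \mathrm{CI}(X)$ is sent to $\{E\in \db(X) \mid E_x \in \dpf(\cO_{X,x})\}$, and for part (3) a hypersurface point $x\in \mathrm{HS}(X)$ is sent to $\{E\in \ds(X) \mid E_x \simeq 0\text{ in }\ds(\cO_{X,x})\}$. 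The complete intersection (resp.\ hypersurface) condition is used so that the local category at $x$ has a nonzero singularity category (resp.\ is small enough) to make $\cP_x$ a proper nontrivial subcategory.

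The first substantive step is to show that each $\cP_x$ is a prime thick subcategory. This is the main technical content and I would invoke the earlier results promised in the introduction (\emph{``we prove that certain classes of thick subcategories are prime thick subcategories''}). Primality would ultimately follow from the triangulated exactness of the localization functor $(-)_x$ combined with a characterization of primes in terms of an indecomposability-type condition on the quotient; the reduction to the local ring $\cO_{X,x}$ lets one apply the paper's prime-detection criterion for categories such as $\dpf$, $\db$, and $\ds$ of a local noetherian ring, respectively under the regularity/CI/hypersurface hypotheses.

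Next, I would give $\tsp(\cT)$ the topology of \cite{MT1}, so that the basic closed sets have the form $V(\cS) = \{\cP \mid \cS \subseteq \cP\}$ for a thick subcategory $\cS$. Continuity of $\phi$ is then immediate from the identity $\phi^{-1}(V(\cS)) = \bigcap_{E\in\cS}(X\setminus \ssup E)$ (intersected with $\mathrm{CI}(X)$ or $\mathrm{HS}(X)$ in cases (2), (3)), which is closed. Injectivity follows from the standard fact that for $x\neq y$ one can find an object vanishing at one stalk but not the other: for (1) a Koszul complex on local parameters; for (2), (3) its bounded or singularity-category image. Checking that the image of a closed set of $X$ is closed in $\phi(X)$ shows $\phi$ is an immersion.

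The hard part is the surjectivity statements in the quasi-affine case, where one must show that \emph{every} prime thick subcategory arises as some $\cP_x$. Here I would pass to the affine situation, where Hopkins--Neeman--Thomason handles (1), the same classification restricted to regular rings handles (2) since $\db=\dpf$, and Stevenson's classification of thick subcategories of $\ds(R)$ for a hypersurface $R$ in terms of specialization-closed subsets of $\Sing R$ handles (3). The delicate point is not merely classifying all thick subcategories but identifying which of them are \emph{prime}; this will require showing that, under these classifications, the prime thick subcategories correspond precisely to the point complements, ruling out larger specialization-closed subsets by constructing, for each such subset strictly contained in a point complement, a pair of objects that witnesses failure of primality.
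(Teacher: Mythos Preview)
There are two genuine errors. First, your map in part (2) is wrong. You send $x\in\mathrm{CI}(X)$ to $\{E\in\db(X)\mid E_x\in\dpf(\cO_{X,x})\}$, claiming the complete intersection hypothesis guarantees a nonzero local singularity category. But regular local rings are complete intersections (of codimension zero), and at a regular point your subcategory is all of $\db(X)$, hence not prime. Even at a singular complete intersection point $x$, your subcategory is prime if and only if $\mathbf{0}$ is prime in $\ds(\cO_{X,x})$, which by Proposition~\ref{lem2} forces $\cO_{X,x}$ to be a hypersurface, not merely a complete intersection. The paper instead uses $\tb_X(x)=\{E\in\db(X)\mid E_x\cong 0\text{ in }\db(\cO_{X,x})\}$, exactly parallel to (1) and (3); primality then reduces (via Lemma~\ref{keylem}(2) and Proposition~\ref{quot}(2)) to $\mathbf{0}$ being prime in $\db(\cO_{X,x})$, and this is Pollitz's characterization of complete intersections (Proposition~\ref{lem1}).

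Second, you have the topology on $\tsp(\cT)$ reversed. The closed sets are $\sZ(\cE)=\{\cP\mid\cP\cap\cE=\emptyset\}$, so the basic closed sets are $\tsup(M)=\{\cP\mid M\notin\cP\}$, not $\{\cP\mid\cS\subseteq\cP\}$ as you wrote. Your continuity computation is accordingly inverted: the preimage of the genuine closed set $\tsup(M)$ under $x\mapsto\cP_x$ is $\{x\mid M_x\not\cong 0\}=\sup_X(M)$, which is closed in $X$; this is exactly the argument in Corollary~\ref{cor2}. The set $\bigcap_{E\in\cS}(X\setminus\sup_X E)$ you wrote down is an intersection of open sets, not closed. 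Apart from these two points, your plan for (1) and (3) and the surjectivity strategy in the quasi-affine case essentially match the paper.
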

\noindent 
Here, $\mathrm{CI}(X)$ is the {\it complete intersection locus} of $X$, which is the set of points $x$ of $X$ such that $\cO_{X,x}$ are complete intersection.
Similarly, $\mathrm{HS}(X)$ is the {\it hypersurface locus}, which is the set of singular points $x$ such that $\cO_{X,x}$ are hypersurface.
If $X$ is excellent and Cohen-Macaulay, then $\mathrm{CI}(X)$ and $\mathrm{HS}(X)$ are open subsets of
 $X$ and $\Sing(X)$, respectively; see Remark \ref{op}. 

We say that two noetherian schemes $X$ and $Y$ are {\it derived equivalent} if their bounded derived categories $\db(X)$ and $\db(Y)$ are equivalent as triangulated categories.
This concept is also known as {\it Fourier-Mukai partners}.
Since by definition equivalent triangulated categories have homeomorphic spectra, Theorem \ref{main2} immediately implies the following result. 

\begin{cor}[Corollary \ref{cor3}]
Let $X$ be a noetherian	scheme.
Then there is an immersion 
$$
\mathrm{CI}(Y) \hookrightarrow \tsp(\db(X))
$$
of topological spaces for any Fourier-Mukai partner $Y$ of $X$.
\end{cor}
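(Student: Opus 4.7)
The plan is to derive this statement by combining two ingredients: the functoriality of $\tsp$ with respect to triangulated equivalences (as noted in the text just before the corollary) and part (2) of Theorem~\ref{main2} applied to the scheme $Y$ rather than $X$.

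First, I would apply Theorem~\ref{main2}(2) to the noetherian scheme $Y$ itself. This immediately provides an immersion of topological spaces
$$
\mathrm{CI}(Y) \hookrightarrow \tsp(\db(Y)).
$$
No work is needed here beyond quoting the result, since $Y$ is a noetherian scheme by the definition of a Fourier--Mukai partner.

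Second, since $Y$ is a Fourier--Mukai partner of $X$, by definition there is a triangulated equivalence $\db(X) \simeq \db(Y)$. The construction of $\tsp$ depends only on the triangulated structure (prime thick subcategories and the topology from \cite{MT1} are defined purely in triangulated terms), so any triangulated equivalence $\cT \simeq \cT'$ induces a homeomorphism $\tsp(\cT) \cong \tsp(\cT')$. Applying this to our equivalence yields a homeomorphism
$$
\tsp(\db(Y)) \xrightarrow{\ \cong\ } \tsp(\db(X)).
$$
Composing the immersion from the first step with this homeomorphism produces the desired immersion $\mathrm{CI}(Y) \hookrightarrow \tsp(\db(X))$.

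There is essentially no obstacle: the corollary is a formal consequence of Theorem~\ref{main2}(2) together with the intrinsic (tensor-structure-free) nature of $\tsp$, which is precisely the feature the paper has been emphasizing. The only conceptually interesting point worth recording is that while $\mathrm{CI}(X)$ and $\mathrm{CI}(Y)$ may differ in general among Fourier--Mukai partners, all of them embed into the same topological space $\tsp(\db(X))$, so the spectrum encodes the complete intersection loci of every partner simultaneously.
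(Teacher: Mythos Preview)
Your proof is correct and matches the paper's approach exactly: the paper states that Corollary~\ref{cor3} is a direct consequence of Corollary~\ref{cor2} (i.e., Theorem~\ref{main2}) applied to $Y$, together with the fact that equivalent triangulated categories have homeomorphic spectra. There is nothing to add.
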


If $\cT$ is a tensor triangulated category, then we have two topological spaces, $\ttsp(\cT)$ and $\tsp(\cT)$.
Although these are not equal in general, there is a relation between prime ideals and prime thick subcategories, which justifies the definition of prime thick subcategories.

\begin{thm}[Proposition \ref{pp} and Corollary \ref{twoprm}]
Let $\cT$ be an essentially small tensor triangulated category and $\cP$ be a radical thick tensor ideal of $\cT$.
If $\cP$ is a prime thick subcategory of $\cT$, then it is a prime thick tensor ideal of $\cT$.
Moreover, the converse holds for $\cT = \dpf(X)$ of a noetherian scheme $X$.
\end{thm}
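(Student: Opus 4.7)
The statement splits into two implications, which I plan to treat separately.

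For the first implication (Proposition~\ref{pp}), I argue by contrapositive. Suppose $\cP$ is a radical thick tensor ideal that is prime as a thick subcategory, and assume for contradiction that $a\ltensor b\in\cP$ while $a\notin\cP$ and $b\notin\cP$. Let $\cA$ denote the radical of the smallest thick tensor ideal containing $\cP$ and $a$, and let $\cB$ denote the analogous object for $b$; each strictly contains $\cP$. Working in Balmer's distributive lattice of radical thick tensor ideals, the product $\cA\cdot\cB$ is generated (up to thick closure and radicals) by the classes $a\ltensor b$, $a\ltensor p$, $p\ltensor b$, and $p\ltensor q$ with $p,q\in\cP$. All four classes lie in $\cP$—the first by hypothesis, the others because $\cP$ is a tensor ideal—so $\cA\cdot\cB\subseteq\cP$. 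The standard identity $\cA\cap\cB=\sqrt{\cA\cdot\cB}$ (valid for radical thick tensor ideals) then gives $\cA\cap\cB=\cP$. This exhibits $\cP$ as the intersection of two thick subcategories strictly larger than itself, contradicting its primality.

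For the converse (Corollary~\ref{twoprm}), let $\cP$ be a prime thick tensor ideal of $\dpf(X)$. By Balmer's reconstruction theorem, $\cP=\cP_x$ for a unique $x\in X$, where $\cP_x$ is the thick tensor ideal of perfect complexes with vanishing stalk at $x$. I again argue by contrapositive: suppose $\cA$ and $\cB$ are thick subcategories strictly containing $\cP_x$ with $\cA\cap\cB=\cP_x$, and pick $a\in\cA\setminus\cP_x$ and $b\in\cB\setminus\cP_x$. Since $\cP_x$ is a prime thick tensor ideal, $a\ltensor b\notin\cP_x$. The goal is to place $a\ltensor b$ inside $\cA\cap\cB$, producing the desired contradiction.

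The crucial claim is the following: for any $c\in\dpf(X)$ and any $a\notin\cP_x$, the object $a\ltensor c$ lies in $\mathrm{thick}(\{a\}\cup\cP_x)$. I plan to prove this by passing to the Verdier quotient. Taking a filtered colimit of the Thomason--Neeman localization theorem over affine open neighborhoods of $x$ yields a triangulated equivalence $\dpf(X)/\cP_x\simeq\dpf(\cO_{X,x})$. Because $\cO_{X,x}$ is a local noetherian ring, every perfect complex over it lies in the thick subcategory generated by the unit, so the image $\bar c$ of $c$ in the quotient lies in $\mathrm{thick}(\overline{\cO_X})$. Tensoring with $\bar a$ then gives $\bar a\ltensor\bar c\in\mathrm{thick}(\bar a)$ in the quotient, which pulls back to $a\ltensor c\in\mathrm{thick}(\{a\}\cup\cP_x)$ in $\dpf(X)$. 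Applying this claim with $c=b$, and symmetrically with the roles of $\cA$ and $\cB$ exchanged, shows $a\ltensor b\in\cA\cap\cB=\cP_x$, contradicting $a\ltensor b\notin\cP_x$.

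The main obstacle is the identification $\dpf(X)/\cP_x\simeq\dpf(\cO_{X,x})$ and the careful transport of thick-subcategory membership across this equivalence. Once this piece of technology is in place, both halves of the theorem reduce to short calculations: Balmer's radical calculus for the first half, and the thick-generation property of the unit in a local perfect derived category for the converse.
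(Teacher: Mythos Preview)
Your first implication is correct and takes a slightly different route from the paper. The paper passes through Proposition~\ref{prid}: from the least thick subcategory $\cX\supsetneq\cP$ it forms the smallest radical ideal $\widetilde{\cX}\supseteq\cX$, checks this is the least radical ideal strictly above $\cP$, and then applies the characterization of prime ideals established in Proposition~\ref{prid}. Your argument via $\cA\cap\cB=\sqrt{\cA\cdot\cB}$ is more direct and bypasses that auxiliary proposition.

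For the converse there is a genuine gap in your contrapositive setup. You assume thick subcategories $\cA,\cB\supsetneq\cP_x$ with $\cA\cap\cB=\cP_x$ and derive a contradiction, but this only shows that $\cP_x$ is finitely meet-irreducible in $\Th(\dpf(X))$, not that it is a prime thick subcategory. The definition requires a \emph{least} thick subcategory strictly above $\cP_x$; its failure could instead manifest as an infinite descending chain of thick subcategories above $\cP_x$ whose intersection is $\cP_x$, where every pairwise intersection still strictly contains $\cP_x$. Your ingredients do suffice to close the gap, but the argument must be reorganized: your key claim (together with $\cP_x$ being an ideal) shows that \emph{every} thick subcategory of $\dpf(X)$ containing $\cP_x$ is automatically a tensor ideal---equivalently, via your identification $\dpf(X)/\cP_x\simeq\dpf(\cO_{X,x})$, every thick subcategory of $\dpf(\cO_{X,x})$ is a tensor ideal because the unit generates. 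One then invokes the Hopkins--Neeman classification to identify $\Th(\dpf(\cO_{X,x}))$ with $\Spcl(\Spec\cO_{X,x})$, where $\{\fm_{X,x}\}$ is visibly the least nonempty element. This is precisely the paper's argument for Theorem~\ref{thder}(1), on which Corollary~\ref{twoprm} rests.
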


The organization of this paper is as follows.
In Section 2, we introduce the notion of a prime thick subcategory, give the definition of the spectrum, and study them for a given essentially small triangulated category $\cT$.
Most arguments therein are along the same line as in \cite[Section 2]{MT1}.
In Section 3, we study prime thick subcategories and the spectra of derived and singularity categories $\dpf(X)$, $\db(X)$, and $\ds(X)$.
The proof of Theorem \ref{main2} will be given here.
In Section 4, we investigate our spectrum for an essentially small tensor triangulated category.

\section{Spectra of triangulated categories}

In this section, fix an essentially small triangulated category $\cT$ and let us define the {\it spectrum} $\tsp(\cT)$ of $\cT$. 
A general construction of spectra has been given in \cite{MT1} for a given set of thick subcategories of $\cT$.
We adopt the {\it prime thick subcategories}, which will be defined soon, as the underlying set of the spectrum.
We begin with our convention.

\begin{conv}
\begin{enumerate}[\rm(1)]
\item
Throughout this paper, all triangulated categories $\cT$ are assumed to be essentially small so that the set $\Th(\cT)$ of all thick subcategories forms a set.
Always we consider $\Th(\cT)$ as a lattice via the inclusion relation.
Also, all subcategories are assumed to be full and additive.
Denote by $\mathbf{0}$ the {\it zero subcategory} of $\cT$, that is, the subcategory consisting of objects isomorphic to the zero object.

\item
For a noetherian scheme $X$, the {\it bounded derived category} $\db(X)$ of $X$ is the derived category of bounded complexes of coherent sheaves on $X$.
An object $\cF$ of $\db(X)$ is said to be {\it perfect} if, for any $x \in X$, there is an open neighborhood $U \subseteq X$ of $x$ such that $\cF|_U$ is isomorphic in $\db(U)$ to a bounded complex of free $\cO_U$-modules of finite rank.
Denote by $\dpf(X)$ the thick subcategory of $\db(X)$ consisting of all perfect complexes.
We call it the {\it perfect derived category} of $X$.
Denote by $\ds(X)$ the Verdier quotient $\ds(X) := \db(X)/\dpf(X)$ which we call the {\it singularity category} of $X$.

If we consider an affine scheme $X = \Spec R$, then we simply write $\dpf(R)$, $\db(R)$, and $\ds(R)$ for the perfect derived, the bounded derived, and the singularity categories of $X$.
\end{enumerate}
\end{conv}

Now, we introduce the definition of a prime thick subcategory which plays a central role  throughout the paper.

\begin{dfn}
We say that a thick subcategory $\cP$ of $\cT$ is {\it prime} if there is a unique thick subcategory that is minimal among all thick subcategories $\cX$ of $\cT$ satisfying $\cP \subsetneq \cX$.
Denote by $\tsp(\cT)$ the set of prime thick subcategories of $\cT$.
\end{dfn}

\begin{rem}
\begin{enumerate}[\rm(1)]
\item
We will see in Section 4 that prime thick tensor ideals of a tensor triangulated category are characterized by a similar condition; see Proposition \ref{prid}.
This fact justifies the above definition of prime thick subcategories.
\item
Recently, Takahashi \cite{Tak21} introduced the notion of {\it core} of the singularity category $\ds(R)$ of a commutative noetherian local ring $R$ as the intersection of all non-trivial thick subcategories of $\ds(R)$.
Our definition of a prime thick subcategory and the concept of a core are closely related.
Indeed, one can easily see that $\mathbf{0}$ is a prime thick subcategory of $\ds(R)$ if and only if the core of $\ds(R)$ is not zero.
\end{enumerate}
\end{rem}

We then recall the definition of a topology on $\tsp(\cT)$.

\begin{dfn}[{\cite[Definition 2.1]{MT1}}]
For a family $\cE$ of objects of $\cT$, we set 
$$
\sZ(\cE) := \{\cP \in \tsp (\cT) \mid \cP \cap \cE = \emptyset\}.
$$
\noindent
We can easily check the following conditions hold:
\begin{itemize}
\item
$\sZ(\cT) = \emptyset$ and $\sZ(\emptyset) = \tsp (\cT)$.
\item
$\bigcap_{i \in I} \sZ(\cE_i) = \sZ(\bigcup_{i \in I} \cE_i)$. 
\item	
$\sZ(\cE) \cup \sZ(\cE') = \sZ(\cE \oplus \cE')$, where $\cE \oplus \cE' := \{M \oplus M' \mid M \in \cE, M' \in \cE'\}$. 
\end{itemize}
Here the first and the second conditions hold trivially, and the third one follows since thick subcategories are closed under taking direct sums.
Therefore we can define a topology on $\tsp (\cT)$ whose family of closed subsets are $\{\sZ(\cE) \mid \cE \subseteq \cT\}$.
We call this topological space the {\it spectrum} of $\cT$.

For an object $M \in \cT$, define the {\it support} of $M$ by
$$
\tsup (M) := \sZ(\{M\}) = \{\cP \in \tsp \cT \mid M \not\in \cP \}.
$$
Then the family of supports $\{\tsup (M)\}_{M \in \cT}$ forms a closed basis of $\tsp (\cT)$.
More generally, we set 
$$
\tsup(\cX) := \bigcup_{M \in \cX} \tsup(M)
$$
for a thick subcategory $\cX$ of $\cT$.
\end{dfn}

Here, we list some definitions and general properties which are given in \cite{MT1}.

\begin{prop}[{\cite[Proposition 2.3]{MT1}}] \label{mt1}
For any prime thick subcategory $\cP$ of $\cT$, one has
$$
\overline{\{\cP\}} = \{\cQ \in \tsp(\cT) \mid \cQ \subseteq \cP\}.
$$	
In particular, $\tsp(\cT)$ is a $T_0$-space.
\end{prop}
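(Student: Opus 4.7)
The plan is to compute $\overline{\{\cP\}}$ directly from the definition of the topology on $\tsp(\cT)$, using the three formal properties of the operator $\sZ(-)$ listed in the excerpt, and then to deduce the $T_0$ property as a formal consequence.

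First I would observe that by definition the closure $\overline{\{\cP\}}$ is the intersection of all closed subsets of $\tsp(\cT)$ containing $\cP$, i.e. the intersection of all $\sZ(\cE)$ with $\cP\in\sZ(\cE)$. By definition of $\sZ(\cE)$, the condition $\cP\in\sZ(\cE)$ is equivalent to $\cE\cap\cP=\emptyset$, so that $\cE\subseteq\cT\setminus\cP$ (regarding the categories as their object-classes). Hence
\[
\overline{\{\cP\}}=\bigcap_{\cE\subseteq\cT\setminus\cP}\sZ(\cE).
\]
Next I would apply the intersection property $\bigcap_{i}\sZ(\cE_i)=\sZ(\bigcup_i\cE_i)$ listed in the definition; the union of all $\cE\subseteq\cT\setminus\cP$ is $\cT\setminus\cP$ itself, so the right-hand side equals $\sZ(\cT\setminus\cP)$. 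Unwinding, $\sZ(\cT\setminus\cP)=\{\cQ\in\tsp(\cT)\mid\cQ\cap(\cT\setminus\cP)=\emptyset\}=\{\cQ\in\tsp(\cT)\mid\cQ\subseteq\cP\}$, which is exactly the desired description. Notice that primality of $\cP$ is only used to ensure $\cP\in\tsp(\cT)$ in the first place; the identification of the closure is otherwise purely formal.

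For the $T_0$ assertion I would argue by contrapositive: suppose $\cP,\cQ\in\tsp(\cT)$ have $\overline{\{\cP\}}=\overline{\{\cQ\}}$. Since $\cP\subseteq\cP$ and $\cQ\subseteq\cQ$, each of $\cP$ and $\cQ$ lies in its own closure, hence also in the other's closure by the assumed equality. Applying the description above twice gives $\cP\subseteq\cQ$ and $\cQ\subseteq\cP$, i.e. $\cP=\cQ$. Since distinct points of $\tsp(\cT)$ thus have distinct closures, $\tsp(\cT)$ is $T_0$.

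The proof is essentially a routine manipulation with the closed basis, so I do not anticipate a serious obstacle; the only subtle point is the set-theoretic step of taking the union of all $\cE\subseteq\cT\setminus\cP$, which is harmless because $\cT$ is essentially small so $\cT\setminus\cP$ is an honest (small) family of objects and hence a legitimate index for $\sZ(-)$.
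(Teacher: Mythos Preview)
Your proof is correct. The paper does not supply its own argument here---it simply cites \cite[Proposition 2.3]{MT1}---so there is nothing to compare against; your direct computation of the closure via the identity $\overline{\{\cP\}}=\bigcap_{\cE\subseteq\cT\setminus\cP}\sZ(\cE)=\sZ(\cT\setminus\cP)$ and the deduction of $T_0$ from the resulting closure formula is the standard route and is complete as written.
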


\begin{dfn}[{\cite[Definition 2.7]{MT1}}] \label{rad}
For a thick subcategory $\cX$ of $\cT$, we define its {\it radical} by
$$
\sqrt{\cX} :=  \bigcap_{\cX \subseteq \cP \in \tsp (\cT)} \cP.
$$	
We say that $\cX$ is {\it radical} if the equality $\sqrt{\cX} = \cX$ holds.
Denote by $\tRad(\cT)$ the set of radical thick subcategories of $\cT$.
\end{dfn}

\begin{dfn}\label{param}
We define the {\it parameter set} of $\cT$ by
$$
\Param(\tsp (\cT)) :=\{\tsup(\cX) \mid \cX \in \Th(\cT)\}.
$$
\end{dfn}

The reason why we call this so is that it parametrizes the radical thick subcategories of $\cT$ as follows.

\begin{thm}[{\cite[Theorem 2.9]{MT1}}]\label{cls}
There is a mutually inverse lattice isomorphisms
$$
\xymatrix{
\tRad(\cT) \ar@<0.5ex>[r]^-{\tsup} &
\Param(\tsp \cT) \ar@<0.5ex>[l]^-{\tsup^{-1}}.
}
$$
Here, we define $\tsup^{-1}(W) := \{M \in \cT \mid \tsup(M) \subseteq W\}$.
\end{thm}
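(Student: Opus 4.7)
The plan is to verify that $\tsup$ and $\tsup^{-1}$ are mutually inverse, inclusion-preserving bijections; since order isomorphisms between posets automatically preserve all existing suprema and infima, this will upgrade to the claimed lattice isomorphism once one notes that $\tRad(\cT)$ is a lattice under intersection and $\sqrt{-\vee-}$.

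First I would establish the key identity $\tsup(\cX) = \tsup(\sqrt{\cX})$ for every $\cX \in \Th(\cT)$. This is immediate from Definition~\ref{rad}: since $\cX \subseteq \sqrt{\cX}$, any prime containing $\sqrt{\cX}$ contains $\cX$; conversely, any prime $\cP$ containing $\cX$ appears in the intersection defining $\sqrt{\cX}$, so $\sqrt{\cX}\subseteq\cP$. Hence the primes not containing $\cX$ coincide with those not containing $\sqrt{\cX}$.

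Next I would compute $\tsup^{-1}(\tsup(\cX))$ for an arbitrary thick subcategory $\cX$. An object $M$ lies in $\tsup^{-1}(\tsup(\cX))$ iff $\tsup(M) \subseteq \tsup(\cX)$; taking complements inside $\tsp(\cT)$, this says that whenever a prime $\cP$ contains $\cX$ it must also contain $M$. Therefore
\[
\tsup^{-1}(\tsup(\cX)) \;=\; \bigcap_{\cX \subseteq \cP \in \tsp(\cT)} \cP \;=\; \sqrt{\cX}.
\]
In particular, for any $W = \tsup(\cX) \in \Param(\tsp(\cT))$, the preimage $\tsup^{-1}(W) = \sqrt{\cX}$ is an intersection of thick subcategories, hence thick; and it is radical because $\sqrt{\sqrt{\cX}} = \sqrt{\cX}$ (the primes above $\sqrt{\cX}$ are exactly those above $\cX$). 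So $\tsup^{-1}$ lands in $\tRad(\cT)$.

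The bijection then follows formally. For $W = \tsup(\cX) \in \Param(\tsp(\cT))$ one has $\tsup(\tsup^{-1}(W)) = \tsup(\sqrt{\cX}) = \tsup(\cX) = W$ by the preliminary observation, and for $\cX \in \tRad(\cT)$ one has $\tsup^{-1}(\tsup(\cX)) = \sqrt{\cX} = \cX$. Both maps are visibly inclusion-preserving, so they form an order isomorphism, which upgrades to the asserted lattice isomorphism. The main subtlety is ensuring that $\tsup^{-1}$ respects radicalization so its image lies in $\tRad(\cT)$ rather than merely in $\Th(\cT)$; this is handled by the idempotence $\sqrt{\sqrt{\cX}} = \sqrt{\cX}$ recorded above. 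Since the theorem is also a direct specialization of \cite[Theorem 2.9]{MT1}, which proves the analogous correspondence starting from an arbitrary family of thick subcategories, one could alternatively cite that result.
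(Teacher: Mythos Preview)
Your argument is correct. The paper itself does not give a proof of this theorem: it is stated with the attribution \cite[Theorem 2.9]{MT1} and used as a black box. Your proposal supplies the direct verification that the cited result asserts, and your final sentence already acknowledges that citing \cite{MT1} is the alternative.

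In terms of content, your proof is essentially the standard one: the identity $\tsup^{-1}(\tsup(\cX))=\sqrt{\cX}$ follows immediately from unwinding Definition~\ref{rad}, and together with $\tsup(\cX)=\tsup(\sqrt{\cX})$ it yields the bijection. One small remark: you need not separately argue that $\tRad(\cT)$ is a lattice to conclude the lattice isomorphism; an order isomorphism between posets preserves all existing meets and joins, so once you know one side is a lattice (and $\Param(\tsp\cT)$ is, being closed under arbitrary unions and intersections inside the power set), the other is automatically a lattice with the corresponding operations. This is consistent with what you wrote but makes the ``upgrade'' step entirely formal.
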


We have explained so far basic definitions and general properties along with \cite{MT1}.
The remainder of this section is devoted to the things that are specific to our definition.

First, we study the functorial properties of spectra.
Let $F: \cT \to \cT'$ be an exact functor of triangulated categories.
Let $\cX \subseteq \cT$ and $\cY \subseteq \cT'$ be thick subcategories.
Then we define the subcategories $F(\cX) \subseteq \cT'$ and $F^{-1}(\cY) \subseteq \cT$ by
\begin{align*}
F(\cX) &:= \{M' \in \cT' \mid M' \cong F(M) \mbox{ for some } M \in \cX\}, \\
F^{-1}(\cY) &:=\{ M \in \cT \mid F(M) \in \cY\}.
\end{align*}
One can easily check that $F^{-1}(\cY)$ is always a thick subcategory of $\cT$.
Therefore, we have a poset homomorphism
$$
\Th(\cT') \to \Th(\cT), \quad \cY \mapsto F^{-1}(\cY).
$$
\begin{prop}\label{quot}
Let $\cT$ be a triangulated category and $\cK$ be its thick subcategory.
Denote by $F: \cT \to \cT/\cK$ the canonical functor.
\begin{enumerate}[\rm(1)]
\item
There is a lattice isomorphism
$$
\Th(\cT/\cK) \xrightarrow{\cong} \{\cX \in \Th(\cT) \mid \cK \subseteq \cX\}, \quad \cQ \mapsto F^{-1}(\cQ).
$$
\item
For a thick subcategory $\cQ \in \Th(\cT/\cK)$, $\cQ$ is a prime thick subcategory of $\cT/\cK$ if and only if $F^{-1}(\cQ)$ is a prime thick subcategory of $\cT$.
\item
$F$ induces an immersion
$$
{}^aF: \tsp(\cT/\cK) \to \tsp(\cT), \quad \cQ \mapsto F^{-1}(\cQ)
$$	
of topological spaces with image $\{\cP \in \tsp(\cT) \mid \cK \subseteq \cP\}$.
\end{enumerate}
\end{prop}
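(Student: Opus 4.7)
The plan is to treat the three parts in order, deducing (2) and (3) from (1) by essentially formal lattice-theoretic and topological arguments, so that the real content is concentrated in (1). The main obstacle I anticipate lies in (1), specifically in verifying that the round-trip identity $F^{-1}(F(\cX)) = \cX$ holds for every thick subcategory $\cX$ of $\cT$ with $\cK \subseteq \cX$; once this is in place, the rest is routine.

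For (1), I would first check that $F^{-1}(\cQ)$ is always a thick subcategory of $\cT$ containing $\cK$ (since $F$ sends each object of $\cK$ to the zero object of $\cT/\cK$), so $\cQ \mapsto F^{-1}(\cQ)$ is a well-defined order-preserving map. For its inverse, given $\cX \in \Th(\cT)$ with $\cK \subseteq \cX$, I would form the full subcategory $\cX/\cK$ of $\cT/\cK$ consisting of objects isomorphic to $F(X)$ for some $X \in \cX$, and show it is thick by lifting distinguished triangles via the calculus of fractions: the third vertex of a lifted triangle sits in $\cX$ because $\cX$ is thick and contains $\cK$. The identity $F(F^{-1}(\cQ))/\cK = \cQ$ is then immediate from essential surjectivity of $F$, while the delicate identity $F^{-1}(F(\cX)) = \cX$ for $\cX \supseteq \cK$ follows again by the calculus of fractions: any $M \in \cT$ with $F(M) \cong F(X)$ for some $X \in \cX$ must lie in $\cX$, because such an isomorphism is witnessed by a roof whose connecting cone lies in $\cK \subseteq \cX$, forcing $M \in \cX$ by thickness.

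For (2), I would exploit that (1) is an order isomorphism onto the principal up-set $\{\cX \in \Th(\cT) \mid \cK \subseteq \cX\}$. Any thick subcategory strictly containing $F^{-1}(\cQ)$ automatically contains $\cK$, so the isomorphism of (1) restricts to a bijection between the strict supersets of $\cQ$ in $\Th(\cT/\cK)$ and those of $F^{-1}(\cQ)$ in $\Th(\cT)$. Since primeness is by definition the existence of a unique minimum element in the poset of strict supersets, (2) drops out immediately.

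For (3), part (2) makes ${}^aF$ well defined and identifies its image as $\{\cP \in \tsp(\cT) \mid \cK \subseteq \cP\}$, while injectivity follows from (1). Continuity and the embedding property then reduce to the single identity $({}^aF)^{-1}(\sZ(\cE)) = \sZ(F(\cE))$ for every family $\cE \subseteq \cT$, which unwinds via the equivalence $F^{-1}(\cQ) \cap \cE = \emptyset \Leftrightarrow \cQ \cap F(\cE) = \emptyset$. Conversely, essential surjectivity of $F$ lets me write every closed subset of $\tsp(\cT/\cK)$ as $\sZ(F(\cE))$ for some $\cE \subseteq \cT$, and the same equivalence then yields ${}^aF(\sZ(F(\cE))) = \sZ(\cE) \cap \{\cP \in \tsp(\cT) \mid \cK \subseteq \cP\}$, giving the homeomorphism onto the image.
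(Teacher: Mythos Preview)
Your proposal is correct and follows essentially the same route as the paper. The paper dispatches (1) by citing \cite[Lemma 3.1]{Tak13} rather than writing out the calculus-of-fractions argument you sketch; your argument for (2) is identical to the paper's; and for (3) the paper verifies the single-object identity $({}^aF)^{-1}(\tsup(M)) = \tsup(F(M))$ and then appeals to the fact that $\{\tsup(M)\}_{M\in\cT}$ and $\{\tsup(F(M))\}_{M\in\cT}$ are closed bases, whereas you work with arbitrary $\sZ(\cE)$ --- a cosmetic difference only. One small point: in (1) you explicitly handle triangles but not direct summands when checking that $\cX/\cK$ is thick; this is easily patched using your own ``delicate identity'' $F^{-1}(F(\cX)) = \cX$, since a summand $F(M)$ of $F(X)$ yields $F(M\oplus M')\cong F(X)$ and hence $M\oplus M'\in\cX$.
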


\begin{proof}
(1) The first statement is by \cite[Lemma 3.1]{Tak13}.	

(2) The lattice isomorphism in (1) restricts to a poset isomorphism
$$
\{\cY \in \Th(\cT/\cK) \mid \cQ \subsetneq \cY\} \cong \{\cX \in \Th(\cT) \mid F^{-1}(\cQ) \subsetneq \cX\}.
$$
The left-hand side admits a unique minimal element if and only if so does the right-hand side, that is, $\cQ$ is a prime thick subcategory of $\cT/\cK$ if and only if $F^{-1}(\cQ)$ is a prime thick subcategory of $\cT$.

(3)
Using (1) and (2), we have a well-defined injective map
$$
{}^aF: \tsp(\cT/\cK) \to \tsp(\cT), \quad \cQ \mapsto F^{-1}(\cQ)
$$
with image $\{\cP \in \tsp(\cT) \mid \cK \subseteq \cP\}$.
It is not hard to check that the equality $({}^aF)^{-1}(\tsup(M)) = \tsup(F(M))$ holds for an object $M \in \cT$.
Since the family $\{\tsup(M)\}_{M \in \cT}$ (resp. $\{\tsup(F(M))\}_{M \in \cT}$) forms a closed basis of $\tsp(\cT)$ (resp. $\tsp(\cT/\cK)$), we conclude that ${}^aF: \tsp(\cT/\cK) \to \tsp(\cT)$ is an immersion of topological spaces.
\end{proof}

We say that an exact functor $F: \cT \to \cT'$ of triangulated categories is {\it triangle equivalence up to direct summands} if it is fully faithful and any object $M' \in \cT'$ is a direct summand of $F(M)$ for some $M \in \cT$.
The following lemma is useful.

\begin{lem}[{\cite[(3.2)]{Bal05}}]\label{cof}
Let $F: \cT \to \cT'$ be a triangle equivalence up to direct summands.
Then one has $M' \oplus M'[1] \in F(\cT)$ for any $M' \in \cT$.
\end{lem}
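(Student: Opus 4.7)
The plan is to produce a single object $C \in \cT$ whose image $F(C)$ is isomorphic to $M' \oplus M'[1]$, by realizing $C$ as the cone of an appropriately chosen endomorphism of some object of $\cT$.

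First, I would use the hypothesis that $F$ is a triangle equivalence up to direct summands to fix $N \in \cT$ together with $M'' \in \cT'$ and an isomorphism $F(N) \cong M' \oplus M''$. Let $\iota \colon M'' \to F(N)$ and $\pi \colon F(N) \to M''$ denote the inclusion and projection with respect to this decomposition, and put $\phi := \iota\pi \colon F(N) \to F(N)$. Under $F(N) = M' \oplus M''$, the endomorphism $\phi$ is the block map $0_{M'} \oplus 1_{M''}$, in particular an idempotent projecting off the $M'$ summand.

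Since $F$ is fully faithful, there is a unique morphism $e \colon N \to N$ with $F(e) = \phi$. Completing $e$ to a distinguished triangle
$$
N \xrightarrow{e} N \to C \to N[1]
$$
in $\cT$ and applying the exact functor $F$ yields a distinguished triangle
$$
F(N) \xrightarrow{\phi} F(N) \to F(C) \to F(N)[1]
$$
in $\cT'$. Because cones in a triangulated category respect direct sums of morphisms, the decomposition $\phi = 0_{M'} \oplus 1_{M''}$ gives
$$
F(C) \cong \Cone(0_{M'}) \oplus \Cone(1_{M''}) \cong (M' \oplus M'[1]) \oplus 0 \cong M' \oplus M'[1],
$$
so $M' \oplus M'[1] \in F(\cT)$, as desired.

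There is no serious obstacle to this argument: everything reduces to full faithfulness of $F$ (to lift the idempotent $\phi$ to $\cT$, where one need not assume $\cT$ is idempotent complete, only that cones exist) and to the standard compatibility of mapping cones with biproducts of morphisms. The one subtlety worth flagging is that the isomorphism $F(C) \cong M' \oplus M'[1]$ does not split $e$ inside $\cT$ — it only splits $\phi$ after $F$ is applied — which is precisely why the conclusion forces the appearance of the suspension $M'[1]$ rather than $M'$ alone.
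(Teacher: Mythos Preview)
Your argument is correct and is precisely the standard one: the paper does not give its own proof but simply cites \cite[(3.2)]{Bal05}, and Balmer's argument there is exactly the cone-of-a-lifted-idempotent construction you describe. (Note the minor typo in the statement: the quantification should be over $M' \in \cT'$, as you implicitly read it.)
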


\begin{prop}\label{idm}
Let $F: \cT \to \cT'$ be a triangle equivalence up to direct summands.
\begin{enumerate}[\rm(1)]
\item
There is a lattice isomorphism
$$
\Th(\cT') \xrightarrow{\cong} \Th(\cT), \quad \cY \mapsto F^{-1}(\cY).
$$	
\item
For a thick subcategory $\cQ \in \Th(\cT')$, $\cQ$ is a prime thick subcategory of $\cT'$ if and only if $F^{-1}(\cQ)$ is a prime thick subcategory of $\cT$.
\item
$F$ induces a homeomorphism	
$$
{}^a F : \tsp(\cT') \xrightarrow{\cong} \tsp(\cT), \quad \cP \mapsto F^{-1}(\cP).
$$
\end{enumerate}	
\end{prop}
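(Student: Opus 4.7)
The plan is to bootstrap part (1) via Lemma \ref{cof} and the fully faithfulness of $F$; parts (2) and (3) will then follow from lattice-theoretic considerations and the same topology argument as in the proof of Proposition \ref{quot}(3).

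For part (1), I would first note that $F^{-1}$ is an order-preserving map between lattices (trivially). For injectivity, suppose $F^{-1}(\cY_1)=F^{-1}(\cY_2)$ and take $M'\in\cY_1$. By Lemma \ref{cof} there exists $N\in\cT$ with $F(N)\cong M'\oplus M'[1]$. Since $\cY_1$ is thick, $M'\oplus M'[1]\in\cY_1$, so $N\in F^{-1}(\cY_1)=F^{-1}(\cY_2)$, and hence $M'\oplus M'[1]=F(N)\in\cY_2$; being a direct summand, $M'\in\cY_2$. By symmetry $\cY_1=\cY_2$. For surjectivity, given $\cX\in\Th(\cT)$, let $\cY$ be the thick subcategory of $\cT'$ generated by $F(\cX)$. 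Because $F$ is fully faithful and triangulated, $F(\cX)$ is already closed under shifts and cones (any morphism $F(A)\to F(B)$ lifts to $\cT$, and cones are preserved), so $\cY$ is just the direct-summand-closure of $F(\cX)$. The inclusion $\cX\subseteq F^{-1}(\cY)$ is immediate. Conversely, if $F(M)\in\cY$, then there exist morphisms $i\colon F(M)\to F(N)$ and $p\colon F(N)\to F(M)$ with $p\circ i=\id_{F(M)}$ for some $N\in\cX$; by full faithfulness these lift to $i_0\colon M\to N$, $p_0\colon N\to M$ with $p_0\circ i_0=\id_M$, exhibiting $M$ as a direct summand of $N\in\cX$, so $M\in\cX$. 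This shows $F^{-1}(\cY)=\cX$.

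Part (2) is then essentially formal. The lattice isomorphism of (1) restricts to a poset isomorphism
$$
\{\cY\in\Th(\cT')\mid \cQ\subsetneq\cY\}\;\cong\;\{\cX\in\Th(\cT)\mid F^{-1}(\cQ)\subsetneq\cX\},
$$
and the existence of a unique minimal element of one side is equivalent to the same on the other.

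For part (3), combining (1) and (2) shows ${}^aF$ is a bijection. Continuity is identical to Proposition \ref{quot}(3): for $M\in\cT$, $({}^aF)^{-1}(\tsup(M))=\tsup(F(M))$, so ${}^aF$ pulls basic closed sets to closed sets. To see that ${}^aF$ is closed, take a basic closed set $\tsup(M')$ with $M'\in\cT'$. Using Lemma \ref{cof}, pick $N\in\cT$ with $F(N)\cong M'\oplus M'[1]$; then $M'\in\cQ$ iff $F(N)\in\cQ$ iff $N\in F^{-1}(\cQ)$, so $({}^aF)(\tsup(M'))=\tsup(N)$, which is closed. Thus ${}^aF$ is a homeomorphism.

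The main obstacle is the surjectivity in part (1): one must produce a thick subcategory of $\cT'$ whose preimage under $F^{-1}$ recovers exactly the given $\cX$, without assuming essential surjectivity of $F$ or idempotent completeness of $\cT$. The key point is that the direct-summand relations produced in $\cT'$ from the summand-closure operation can always be lifted back to genuine direct-summand relations in $\cT$ via fully faithfulness.
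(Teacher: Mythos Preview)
Your proof is correct and follows essentially the same strategy as the paper. The paper's own proof is extremely brief: it reduces (2) and (3) to (1) exactly as you do (mirroring the argument of Proposition~\ref{quot}), and for (1) it simply cites \cite[Proposition~3.13]{Bal05}. Balmer's argument there constructs the inverse of $F^{-1}$ by sending $\cX\in\Th(\cT)$ to $\{N\in\cT'\mid N\oplus N[1]\in F(\cX)\}$, which coincides with your direct-summand closure of $F(\cX)$; your explicit lifting of idempotent splittings via full faithfulness is precisely the verification that this subcategory pulls back to $\cX$.
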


\begin{proof}
By the same reason as in the proof of Proposition \ref{quot}, it is enough to show that the map	
$$
\Th(\cT') \to \Th(\cT), \quad \cY \mapsto F^{-1}(\cY)
$$
is a lattice isomorphism.	
This follows from the same argument as in \cite[Proposition 3.13]{Bal05}.
\end{proof}

\if0\begin{lem}
Let $F: \cT \to \cT'$ be a triangle equivalence up to direct summands.
Then $F$ induces a lattice isomorphism
$$
\Th(\cT') \xrightarrow{\cong} \Th(\cT), \quad \cY \mapsto F^{-1}(\cY).
$$	
\end{lem}

\begin{prop}\label{idm}
Let $F: \cT \to \cT'$ be a triangle equivalence up to direct summands.
Then there is a homeomorphism
$$
{}^a F : \tsp(\cT') \xrightarrow{\cong} \tsp(\cT), \quad \cP \mapsto F^{-1}(\cP)
$$
\end{prop}

\begin{proof}
It is enough to check that there is a lattice isomorphism
$$
\Th(\cT') \xrightarrow{\cong} \Th(\cT), \quad \cX \mapsto F^{-1}(\cX).
$$
Indeed, for any thick subcategory $\cY \in \Th(\cT')$, this lattice isomorphism induces a poset isomorphism
$$
\{\cX' \in \Th(\cT') \mid \cY \subsetneq \cX'\} \xrightarrow{\cong} \{\cX \in \Th(\cT) \mid F^{-1}(\cY) \subsetneq \cX\}
$$
Then the left-hand side has a unique minimal element if and only if so does the right-hand side.
This means that $\cY$ is a prime thick subcategory of $\cT'$ if and only if so is $F^{-1}(\cY)$.

We follows the argument of \cite[Proposition 3.13]{Bal05}.
For a thick subcategory $\cY \in \Th(\cT)$, write $F(\cY)$ the subcategory of $\cT'$ consisting of object $N \in \cT'$ such that $N \cong F(M)$ for some $M \in \cY$.  
Since $F$ is fully faithful, $F(\cY)$ is a triangulated subcategory of $\cT'$.

First remark that for any object $N \in \cT'$, one has $N \oplus N[1] \in F(\cT)$.

For a thick subcategory $\cY \in \Th(\cT)$, set
$$
\cX:= \{N \in \cT' \mid N \oplus N[1] \in F(\cY)\}.
$$
Then $\cX$ is a thick subcategory of $\cT'$ with $F^{-1}(\cX)= \cY$.
\end{proof}
\fi


Next, we discuss the determination of the spectrum of a given triangulated category using {\it classifying support data}.


\begin{dfn}\label{supd}
Let $\cT$ be a triangulated category.
A {\it support data} for $\cT$ is a pair $(X, \sigma)$ of a topological space $X$ and an assignment $\sigma$ which assigns to an object $M$ of $\cT$ a closed subset $\sigma(M)$ of $X$ satisfying the following conditions;
\begin{enumerate}[\rm(1)]
\item
$\sigma(0) = \emptyset$.
\item
$\sigma(M[n]) = \sigma(M)$ holds for each object $M$ of $\cT$ and integer $n\in \ZZ$.
\item
$\sigma(M) \subseteq \sigma(L) \cup \sigma(N)$ holds for each triangle $L \to M \to N \to L[1]$ in $\cT$.
\item
$\sigma(M \oplus N) = \sigma(M) \cup \sigma(N)$ holds for each pair of objects $M, N$ of $\cT$.
\end{enumerate}
\end{dfn}

A lot of examples of support data for triangulated categories naturally appear in various fields of mathematics.
Here we present the ones that will appear in this paper.

\begin{ex}
\begin{enumerate}[\rm(1)]
\item (\cite[Remark 2.2]{MT1})
The assignment $\cT \ni M \mapsto \tsup(M) \subseteq \tsp(\cT)$ defines a support data $(\tsp(\cT), \tsup)$ for $\cT$.
\item 
Let $X$ be a noetherian scheme.
One can easily verify that the {\it cohomological support} 
$$
\sup_X(\cF) := \{x \in X \mid \cF_x \not\cong 0 \mbox{ in } \db(\cO_{X,x})\}
$$
of $\cF \in \db(X)$ defines a support data $(X, \sup_X)$ for $\db(X)$.
\item 
Let $X$ be a noetherian scheme and denote by $\Sing(X)$ the singular locus of $X$.
Then the {\it singular support} 
\begin{align*}
\ssup_X(\cF) &:= \{x \in \Sing(X) \mid \cF_x \not\cong 0 \mbox{ in } \ds(\cO_{X,x})\} 
\end{align*}
of $\cF \in \ds(X)$ defines a support data $(\Sing(X), \ssup_X)$ for $\ds(X)$, see \cite[Example 2.4]{M}.
\end{enumerate}
\end{ex}

We say that a subset $W$ of a topological space $X$ is {\it specialization-closed} if it is a union of closed subsets of $X$.
We easily see that $W$ is specialization-closed if and only if $\ol{\{x\}} \subseteq W$ for any $x \in W$.
Denote by $\Spcl(X)$ the set of specialization-closed subsets of $X$.

\begin{dfn}\label{csd}
A support data $(X, \sigma)$ for $\cT$ is said to be {\it classifying} if it satisfies the following conditions;
\begin{enumerate}[\rm(i)]
\item
$X$ is a noetherian sober space. 
Here, we say that a topological space $X$ is {\it sober} if every irreducible closed subset has a unique generic point.
\item	
There is a mutually inverse lattice isomorphisms
$$
\xymatrix{
\Th(\cT) \ar@<0.5ex>[r]^-{\sigma} &
\Spcl(X) \ar@<0.5ex>[l]^-{\sigma^{-1}},
}
$$
where $\sigma(\cX) := \bigcup_{M \in \cX} \sigma(M)$ and $\sigma^{-1}(W) := \{M \in \cT \mid \sigma(M) \subseteq W\}$.
\end{enumerate}
\end{dfn}

Using a classifying support data, we can translate a problem concerning 
 thick subcategories to a problem about specialization-closed subsets.
The following lemma characterizes the topological counterpart of prime thick subcategories.

\begin{lem}\label{lem}
Let $X$ be a sober space and $W$ be a specialization-closed subset of $X$.
The following conditions are equivalent.
\begin{enumerate}[\rm(i)]
\item	
There is a unique minimal specialization-closed subset $T$ of $X$ such that $W \subsetneq T$.
\item
There is a unique element $x \in X$ such that $W = \{x' \in X \mid x \not\in \ol{\{x'\}}\}$. 
\end{enumerate}		
\end{lem}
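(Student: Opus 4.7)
The plan is to exhibit a direct bijection between the distinguished points $x$ in (ii) and the minimum strict enlargements $T$ in (i) via the formula $T = W \cup \overline{\{x\}}$, and to show both conditions reduce to the existence and uniqueness of the single element of $T \setminus W$.

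For (ii)$\Rightarrow$(i), let $x$ be the element given in (ii) and set $T := W \cup \overline{\{x\}}$. First I would check $x \notin W$ (from the defining formula applied to $x' = x$) so that $T \supsetneq W$, and note that $T$ is specialization-closed as a union of two such sets. For minimality I would pick any specialization-closed $T' \supsetneq W$ and any $y \in T' \setminus W$; since $y \notin W$, the definition of $W$ forces $x \in \overline{\{y\}} \subseteq T'$, hence $\overline{\{x\}} \subseteq T'$ and $T \subseteq T'$. Thus $T$ is the minimum (in particular the unique minimal) strict enlargement of $W$.

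For (i)$\Rightarrow$(ii), let $T$ be the unique minimal enlargement. The first step is to pin down $T \setminus W$. For any $y \in T \setminus W$, the set $\overline{\{y\}} \cup W$ is specialization-closed, lies inside $T$, and strictly contains $W$, so by minimality it equals $T$. Applying this to two points $y_1, y_2 \in T \setminus W$ gives $y_2 \in \overline{\{y_1\}}$ and $y_1 \in \overline{\{y_2\}}$; here I invoke sobriety (which entails $T_0$) to conclude $y_1 = y_2$. So $T \setminus W = \{x\}$ for a unique $x$. It remains to verify $W = W' := \{x' \mid x \notin \overline{\{x'\}}\}$. The inclusion $W \subseteq W'$ follows from $W$ being specialization-closed: if $x' \in W$ and $x \in \overline{\{x'\}}$, then $x \in W$, contradicting $x \in T \setminus W$. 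For the reverse, if some $y \in W' \setminus W$ existed, the minimality argument would force $T \subseteq \overline{\{y\}} \cup W$, hence $x \in \overline{\{y\}}$ since $x \notin W$, contradicting $y \in W'$. Uniqueness of $x$ is automatic from uniqueness of $T$ together with $T \setminus W = \{x\}$.

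The only delicate point is the $T_0$/sobriety step that collapses $T \setminus W$ to a singleton; everything else is direct manipulation of specialization-closed sets. Since sobriety is included in the hypotheses (and in fact only $T_0$-ness is needed here), I do not expect any real obstacle, and the proof should read as a short chain of elementary set-theoretic verifications.
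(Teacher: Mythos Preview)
Your proof is correct and follows essentially the same approach as the paper's: both directions hinge on identifying $T\setminus W$ as a singleton and verifying $W=\{x'\mid x\notin\overline{\{x'\}}\}$ via the minimality of $T$. The only cosmetic difference is where sobriety enters: you invoke it (as $T_0$) in (i)$\Rightarrow$(ii) to collapse two candidate points of $T\setminus W$, whereas the paper invokes it in (ii)$\Rightarrow$(i) to show $W\cup\{x\}$ is specialization-closed (your choice $T=W\cup\overline{\{x\}}$ sidesteps that step), and the paper's (i)$\Rightarrow$(ii) instead squeezes $W\subseteq W\cup(\overline{\{x\}}\setminus\{x\})\subsetneq W\cup\overline{\{x\}}\subseteq T$ to force $T\setminus W=\{x\}$, which also tacitly uses sobriety.
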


\begin{proof}
(i) $\Rightarrow$ (ii).
Take an element $x \in T \setminus W$.	
Since $T$ is specialization-closed, there are inclusions $W \subseteq W \cup (\overline{\{x\}}\setminus \{x\}) \subsetneq W \cup \overline{\{x\}} \subseteq T$ of specialization-closed subsets of $X$.
From the assumption on $T$, one has the equalities $W = W \cup (\overline{\{x\}}\setminus \{x\})$ and $T = W \cup \overline{\{x\}}$.
These two equalities yield $T = W \cup \overline{\{x\}} = W \cup \{x\}$.
Moreover, such $x$ is uniquely determined by the equality $\{x\} = T \setminus W$.
In summary, there is a unique element $x \in T \setminus W$ which has the following property: each specialization-closed subset $V$ of $X$ with $W\subsetneq V$ satisfies $x \in V$.

Set $W' := \{x' \in X \mid x \not\in \ol{\{x'\}}\}$ and prove the equality $W = W'$.
The inclusion $W \subseteq W'$ is trivial since $W$ is specialization-closed and $x \not\in W$.
On the other hand, one can easily check that $W'$ is a specialization-closed subset of $X$ which does not contain $x$.
Thus we conclude the equality $W=W'$ from the above argument.

(ii) $\Rightarrow$ (i).
First we prove that $T:= W \cup \{x\}$ is specialization-closed, i.e., $\overline{\{x\}} \subseteq W \cup \{x\}$.
For any element $x'\in \overline{\{x\}} \setminus \{x\}$, it satisfies $x \not\in \overline{\{x'\}}$ and hence $x' \in W$.
Here, we used the assumption that $X$ is sober.
Next, we shall check that $T$ is a unique minimal specialization-closed subset that properly contains $W$.
Let $V$ be a specialization-closed subset of $X$ such that $W \subsetneq V$.
Take an element $x' \in V \setminus W$.
Then we have $x \in \overline{\{x'\}} \subseteq V$ and this implies $T = W \cup \{x\} \subseteq V$.
\end{proof} 

Now, we are ready to prove the main theorem in this section.

\begin{thm}\label{rcst}
Let $\cT$ be a triangulated category and $(X, \sigma)$ be a classifying support data for $\cT$.
Then there is a homeomorphism 
$
X \cong \tsp(\cT).
$
\end{thm}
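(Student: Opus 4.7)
The plan is to build a homeomorphism $\phi \colon X \to \tsp(\cT)$ out of the classifying data, with the topological dictionary between points of $X$ and prime thick subcategories provided by Lemma \ref{lem}. Concretely, for each $x \in X$ set $W_x := \{x' \in X \mid x \notin \overline{\{x'\}}\}$, which is specialization-closed, and define
\[
\phi(x) := \sigma^{-1}(W_x).
\]
First I would verify that $\phi(x) \in \tsp(\cT)$. By Lemma \ref{lem}, $W_x$ has a unique minimal specialization-closed proper superset; transporting this through the lattice isomorphism $\sigma^{-1}\colon \Spcl(X) \xrightarrow{\cong}\Th(\cT)$ of Definition \ref{csd} gives a unique minimal thick subcategory strictly containing $\sigma^{-1}(W_x)$, which is the definition of a prime thick subcategory. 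For injectivity, $\phi(x) = \phi(y)$ forces $W_x = W_y$ (again by the lattice isomorphism), and then the uniqueness clause of Lemma \ref{lem} yields $x = y$ (this is where soberness is essential). For surjectivity, given a prime $\cP \in \tsp(\cT)$, the lattice isomorphism converts the uniqueness of the minimal thick superset of $\cP$ into the uniqueness of a minimal specialization-closed proper superset of $\sigma(\cP)$, hence $\sigma(\cP) = W_x$ for some (unique) $x \in X$ by Lemma \ref{lem}, giving $\cP = \phi(x)$.

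Next, continuity is the easy half. Since $\{\tsup(M)\}_{M \in \cT}$ is a closed basis for $\tsp(\cT)$, it suffices to compute $\phi^{-1}(\tsup(M))$. By unwinding definitions, $x \in \phi^{-1}(\tsup(M))$ iff $M \notin \sigma^{-1}(W_x)$ iff $\sigma(M) \not\subseteq W_x$; since $\sigma(M)$ is closed and hence specialization-closed, this last condition is equivalent to $x \in \sigma(M)$, so
\[
\phi^{-1}(\tsup(M)) = \sigma(M),
\]
which is closed in $X$.

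The main obstacle is showing that $\phi$ is a closed map, so that its set-theoretic inverse is continuous. The key computation is the identity $\phi(\overline{\{x\}}) = \overline{\{\phi(x)\}}$: by Proposition \ref{mt1}, $\phi(y) \in \overline{\{\phi(x)\}}$ iff $\phi(y) \subseteq \phi(x)$ iff $W_y \subseteq W_x$, and unwrapping $W_{(-)}$ shows this is in turn equivalent to $\overline{\{y\}} \subseteq \overline{\{x\}}$, i.e.\ $y \in \overline{\{x\}}$. Once this is established, I would invoke the noetherian hypothesis on $X$: every closed subset $Z \subseteq X$ is a finite union of irreducible closed subsets, and each irreducible closed subset is of the form $\overline{\{x\}}$ by soberness, so
\[
\phi(Z) = \phi(\overline{\{x_1\}}) \cup \cdots \cup \phi(\overline{\{x_n\}}) = \overline{\{\phi(x_1)\}} \cup \cdots \cup \overline{\{\phi(x_n)\}}
\]
is a finite union of closed subsets of $\tsp(\cT)$, hence closed. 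Combining this with bijectivity and continuity yields the desired homeomorphism $X \cong \tsp(\cT)$.
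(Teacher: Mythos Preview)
Your proof is correct and follows essentially the same route as the paper: the same map $\phi(x)=\sigma^{-1}(W_x)$, bijectivity via Lemma \ref{lem} and the lattice isomorphism, continuity via $\phi^{-1}(\tsup(M))=\sigma(M)$, and closedness via $\phi(\overline{\{x\}})=\overline{\{\phi(x)\}}$ together with noetherianity. The only minor difference is that the paper verifies $\phi^{-1}(\tsup(M))=\sigma(M)$ by first decomposing $\sigma(M)$ into irreducible components (using noetherian sober), whereas you argue directly from $\sigma(M)$ being closed; your version is slightly cleaner here, and you are also more explicit than the paper in spelling out how noetherianity is used for the closed-map step.
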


\begin{proof}
By the lattice isomorphism in Definition \ref{csd}(ii), the prime thick subcategories correspond to the specialization-closed subsets $W$ of $X$ which satisfy the equivalent conditions in Lemma \ref{lem}.
Therefore the map
$$
\varphi: X \to \tsp(\cT),\quad x \mapsto \sigma^{-1}(\{x' \in X \mid x \not\in \ol{\{x'\}}\}) 
$$
is a well-defined bijection.
Thus it remains to check that this map is a homeomorphism.
To this end, let us check the equality $\varphi(x) = \{M \in \cT \mid x \not\in \sigma(M)\}$ first.
Let $M$ be an object of $\cT$.
Since $\sigma(M)$ is a closed subset of a noetherian sober space $X$, we can decompose it into the union $\sigma(M) = \overline{\{x_1\}} \cup \cdots \cup \overline{\{x_r\}}$ of irreducible components.
We then obtain the following equivalences, which mean $\varphi(x) = \{M \in \cT \mid x \not\in \sigma(M)\}$:
\begin{align*}
\sigma(M) \subseteq \{x' \in X \mid x \not\in \ol{\{x'\}}\} &\Leftrightarrow x_i \in \{x' \in X \mid x \not\in \ol{\{x'\}}\} \mbox{ for all $i$} \\
&\Leftrightarrow x \not\in \ol{\{x_i\}}\mbox{ for all $i$} \\
&\Leftrightarrow x \not\in \sigma(M).
\end{align*}

For any $x \in X$ and $M \in \cT$, the equality $\varphi(x) = \{M \in \cT \mid x \not\in \sigma(M)\}$ immediately implies 
$
\varphi^{-1}(\tsup(M)) = \sigma(M).
$
Hence $\varphi: X \to \tsp(\cT)$ is continuous.
On the other hand, for any two element $x,x' \in X$, $x \in \ol{\{x'\}}$ if and only if $\varphi(x) \subseteq \varphi(x')$.
Indeed, there are the following equivalences:
\begin{align*}
x \in \ol{\{x'\}} &\Leftrightarrow x' \not\in \sigma(\varphi(x)) (= \{x' \in X \mid x \not\in \ol{\{x'\}}\}) \\
&\Leftrightarrow x' \not\in \sigma(M) \mbox{ for any } M \in \varphi(x) \\
&\Leftrightarrow M \in \varphi(x')\mbox{ for any } M \in \varphi(x) \\
&\Leftrightarrow \varphi(x) \subseteq \varphi(x'). 
\end{align*}
Then Proposition \ref{mt1} yields that $\varphi(\ol{\{x\}}) = \ol{\{\varphi(x)\}}$ and therefore $\varphi^{-1}$ is also continuous.
\end{proof}

Recall that a commutative noetherian local ring $(R, \fm)$ is a {\it hypersurface} if the $\fm$-adic completion of $R$ is a regular local ring modulo a non-zero element. 
Using Theorem \ref{rcst}, we can determine the spectra of $\dpf(X)$ and $\ds(X)$ for some special cases.

\begin{cor}[{cf. \cite[Theorems 3.10 and 4.7]{M}}]\label{cor}
Let $X$ be a noetherian quasi-affine scheme.
\begin{enumerate}[\rm(1)]
\item	
There is a homeomorphism $X \cong \tsp(\dpf(X))$.
\item
Assume further that $X$ is locally a hypersurface (i.e., $\cO_{X,x}$ is a hypersurface for all $x \in X$).
Then there is a homeomorphism $\Sing(X) \cong \tsp(\ds(X))$.
\end{enumerate}
\end{cor}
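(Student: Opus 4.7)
The plan is to deduce both statements directly from Theorem \ref{rcst} by exhibiting appropriate classifying support data.

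For part (1), I would use the support data $(X, \mathrm{supp}_X)$ for $\dpf(X)$, where $\mathrm{supp}_X(\cF) = \{x \in X \mid \cF_x \not\cong 0 \text{ in } \dpf(\cO_{X,x})\}$ is the cohomological support restricted to $\dpf(X) \subseteq \db(X)$. To invoke Theorem \ref{rcst} I must check that this is a classifying support data. The four axioms of Definition \ref{supd} are inherited from the support data on $\db(X)$. The underlying space $X$ is noetherian and sober, as it is the topological space of a noetherian scheme. The only nontrivial point is the lattice isomorphism $\Th(\dpf(X)) \cong \Spcl(X)$ via $\mathrm{supp}_X$, and this is exactly \cite[Theorem 3.10]{M} (which rests, in the affine case, on the Hopkins--Neeman--Thomason classification). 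Once these ingredients are in place, Theorem \ref{rcst} gives the homeomorphism $X \cong \tsp(\dpf(X))$.

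For part (2), I would run the analogous argument with the singular support data $(\Sing(X), \ssup_X)$ for $\ds(X)$, which was recalled in the excerpt. Again the support-data axioms hold by construction. The singular locus $\Sing(X)$ of a noetherian scheme, viewed with the subspace topology, is noetherian and sober. The lattice isomorphism $\Th(\ds(X)) \cong \Spcl(\Sing(X))$ via $\ssup_X$, under the hypothesis that $X$ is quasi-affine and locally a hypersurface, is \cite[Theorem 4.7]{M}. Theorem \ref{rcst} then gives $\Sing(X) \cong \tsp(\ds(X))$.

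Because Theorem \ref{rcst} packages everything, there is essentially no calculation to do beyond assembling the hypotheses. The genuine mathematical content is imported from \cite{M}: the classification theorems for thick subcategories of $\dpf(X)$ and $\ds(X)$ in the quasi-affine (and, for the second, locally hypersurface) setting. The main obstacle, then, lies not in this corollary but in those cited classifications, where one must promote the tensor-ideal classifications of Thomason-type to classifications of \emph{all} thick subcategories; in the affine case this is automatic because the unit generates and every thick subcategory is a tensor ideal, and the quasi-affine case is handled in \cite{M} by reducing to the affine one. Given these inputs, the present corollary is a one-line application of Theorem \ref{rcst}.
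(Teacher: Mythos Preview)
Your proposal is correct and follows the same overall strategy as the paper: verify that $(X,\sup_X)$ (resp.\ $(\Sing(X),\ssup_X)$) is a classifying support data and then invoke Theorem~\ref{rcst}. The only difference is in how the classification step is sourced. You import it wholesale from \cite{M}, whereas the paper argues it in place: since $X$ is quasi-affine, $\cO_X$ is ample, so by \cite[Lemma~3.12]{Tho} any thick subcategory containing $\cO_X$ equals $\dpf(X)$; one then checks that for a thick subcategory $\cX$ the subcategory $\{\cF:\cF\ltensor\cG\in\cX\text{ for all }\cG\in\cX\}$ contains $\cO_X$, forcing $\cX$ to be a tensor ideal, and Thomason's classification \cite[Theorem~3.15]{Tho} finishes part~(1). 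For part~(2) the paper runs the same trick with Stevenson's action $\dpf(X)\times\ds(X)\to\ds(X)$ to show every thick subcategory of $\ds(X)$ is a $\dpf(X)$-submodule, and then applies \cite[Theorem~7.7]{Ste14}. Your route is shorter on the page; the paper's is more self-contained and makes explicit exactly where quasi-affineness is used.
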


\begin{proof}
Since $X$ is quasi-affine, the structure sheaf $\cO_{X}$ is an ample line bundle.
According to \cite[Lemma 3.12]{Tho}, every thick subcategory of $\dpf(X)$ containing $\cO_X$ is $\dpf(X)$.

(1)
Let $\cX$ be a thick subcategory of $\dpf(X)$.
Then we can easily verify that the subcategory
$$
\cY:= \{\cF \in \dpf(X) \mid \cF \ltensor_{\cO_X} \cG \in \cX \mbox{ for all } \cG \in \cX\}
$$
is a thick subcategory of $\dpf(X)$ containing $\cO_X$.
Therefore $\cY = \dpf(X)$ and this means that $\cX$ is a thick tensor ideal.
Then the classification \cite[Theorem 3.15]{Tho} shows that $(X, \sup_X)$ is a classifying support data for $\dpf(X)$.
Thus Theorem \ref{rcst} finishes the proof.

(2) 
First note that there is an action $*: \dpf(X) \times \ds(X) \to \ds(X)$ on $\ds(X)$ which makes $\ds(X)$ a $\dpf(X)$-module; see \cite[Section 3]{Ste14}.
Let $\cX$ be a thick subcategory of $\ds(X)$.
Then the subcategory
$$
\cY:= \{\cF \in \dpf(X) \mid \cF * \cG \in \cX \mbox{ for all } \cG \in \cX\}
$$
is a thick subcategory of $\dpf(X)$ containing $\cO_X$.
Again this implies that $\cY = \dpf(X)$ and therefore $\cX$ is a $\dpf(X)$-submodule of $\ds(X)$.
Then the classification \cite[Theorem 7.7]{Ste14} gives us a classifying support data $(\Sing(X), \ssup_X)$ for $\ds(X)$.
The homeomorphism follows by Theorem \ref{rcst}.
\end{proof}

\section{Prime thick subcategories of derived and singularity categories of noetherian schemes}

In this section, we study prime thick subcategories of the derived and the singularity categories of a noetherian scheme $X$ and investigate their spectra.

We start with several lemmas, which allow us to study prime thick subcategories locally.
First, let us discuss a comparison between the Verdier localization and the localization by a prime ideal of a commutative noetherian ring.
Let $R$ be a commutative noetherian ring and $\cT$ be an $R$-linear triangulated category.
We denote by $\cS_\cT(\fp)$ the subcategory of $\cT$ consisting of objects $M$ which satisfy $1_M = 0 \mbox{ in } \Hom_{\cT}(M, M)_\fp$, i.e., $a1_M = 0$ in $\cT$ for some $a  \in R \setminus \fp$.

\begin{lem}\label{loc}
\begin{enumerate}[\rm(a)]
\item
$\cS_{\cT}(\fp)$ is a thick subcategory of $\cT$.
\item
For any element $a \in R \setminus \fp$, the mapping cone $C(a 1_M)$ of $a 1_M: M \to M$ is in $\cS_\cT(\fp)$.
\item
The canonical functor $Q: \cT \to \cT/\cS_\cT(\fp)$ induces an isomorphism
$$
\Hom_{\cT}(M, N)_\fp \xrightarrow{\cong} \Hom_{\cT/\cS_\cT(\fp)}(M, N), \quad f/a \mapsto Q(a1_N)^{-1}Q(f)
$$
of $R$-modules for any $M, N \in \cT$.
\end{enumerate}
\end{lem}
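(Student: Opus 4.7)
The plan is to exploit the $R$-linearity of $\cT$, which gives the basic identity $f \circ (r 1_M) = (r 1_N) \circ f$ for any morphism $f: M \to N$ and scalar $r \in R$. With this, each part reduces to a standard triangulated manipulation together with elementary properties of localization. Throughout I will use that consecutive arrows in a triangle compose to zero, together with the associated factorization principle.

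For (a), closure under shifts and direct summands is immediate from the definition. Given a triangle $L \xrightarrow{f} M \xrightarrow{g} N \xrightarrow{h} L[1]$ with $L, N \in \cS_\cT(\fp)$, pick $a, b \in R \setminus \fp$ with $a 1_L = 0$ and $b 1_N = 0$. By $R$-linearity, $g \circ (b 1_M) = (b 1_N) \circ g = 0$, so $b 1_M$ factors as $f \circ \phi$ for some $\phi: M \to L$; hence $(ab) 1_M = f \circ (a 1_L) \circ \phi = 0$, and $ab \in R \setminus \fp$ since $\fp$ is prime.

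For (b), write the standard triangle $M \xrightarrow{a 1_M} M \xrightarrow{p} C \xrightarrow{q} M[1]$ where $C = \Cone(a 1_M)$. The composite $(a 1_C) \circ p = p \circ (a 1_M)$ vanishes (consecutive arrows), so $a 1_C = \psi \circ q$ for some $\psi: M[1] \to C$. Dually, $q \circ (a 1_C) = (a 1_M)[1] \circ q = 0$ for the same reason. Composing these gives $(a 1_C)^2 = \psi \circ (q \circ a 1_C) = 0$, so $a^2 1_C = 0$ with $a^2 \in R \setminus \fp$.

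Part (c) is the technical step. For well-definedness, $f/a = f'/a'$ in $\Hom_\cT(M,N)_\fp$ means $c(a' f - a f') = 0$ in $\cT$ for some $c \in R \setminus \fp$; applying $Q$ and canceling the isomorphism $Q(c 1_N)$, which is invertible because its cone lies in $\cS_\cT(\fp)$ by (b), yields $Q(a' 1_N) Q(f) = Q(a 1_N) Q(f')$, so the two prospective images agree. For surjectivity, represent a morphism $\alpha: M \to N$ in $\cT/\cS_\cT(\fp)$ by a roof $M \xleftarrow{s} M' \xrightarrow{h} N$ with $\Cone(s) \in \cS_\cT(\fp)$; the argument of (a) applied to the triangle $M' \xrightarrow{s} M \to \Cone(s) \to M'[1]$ produces $a \in R \setminus \fp$ and $g: M \to M'$ with $s g = a 1_M$, and then $\alpha = Q(h) Q(s)^{-1} = Q(hg) Q(a 1_M)^{-1} = Q(a 1_N)^{-1} Q(hg)$ by $R$-linearity, exhibiting $\alpha$ as the image of $hg/a$. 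For injectivity, if $Q(f) = 0$ then $t \circ f = 0$ for some $t: N \to N'$ with $\Cone(t) \in \cS_\cT(\fp)$; factoring $f$ through $\Cone(t)[-1] \in \cS_\cT(\fp)$ (using (a) for the shift) and multiplying by a scalar $c \in R \setminus \fp$ annihilating $1_{\Cone(t)[-1]}$ gives $c f = 0$ in $\cT$, whence $f/a = 0$ in the localization.

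The main obstacle is the surjectivity step in (c): converting an abstract Verdier roof into the concrete fraction $f/a$ requires combining (a) and (b), and the passage between $Q(f) Q(a 1_M)^{-1}$ and $Q(a 1_N)^{-1} Q(f)$ via $R$-linearity must be done with some care so that the resulting map is manifestly $R$-linear.
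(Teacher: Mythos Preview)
Your proof is correct. Parts (a) and (c) track the paper's argument closely: part (a) is identical, and in (c) you use left roofs $M \xleftarrow{s} M' \xrightarrow{h} N$ where the paper uses right roofs $M \xrightarrow{f} L \xleftarrow{s} N$, but the mechanism---factoring a scalar multiple of the identity through the quasi-isomorphism once its cone is killed---is the same.

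The one genuine difference is in (b). The paper argues indirectly: applying $\Hom_\cT(N,-)_\fp$ to the defining triangle and using that multiplication by $a$ is an isomorphism on $\Hom_\cT(N,M)_\fp$ forces $\Hom_\cT(N,C)_\fp = 0$ for all $N$, whence $1_C = 0$ in $\Hom_\cT(C,C)_\fp$. Your argument is more hands-on: you factor $a1_C$ through $q$ on one side and kill $q \circ (a1_C)$ on the other to obtain $(a1_C)^2 = 0$, yielding the explicit annihilator $a^2$. Your approach avoids the Hom-functor machinery and gives a concrete witness; the paper's approach is slightly slicker in that it shows directly that the entire localized endomorphism ring vanishes. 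Both are standard and equally valid.
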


\begin{proof}
(a)
Obviously, $\cS_\cT(\fp)$ is closed under direct summands and shifts.
It remains to show that $\cS_\cT(\fp)$ is closed under extensions.
Take a triangle $L \xrightarrow{f} M \xrightarrow{g} N \to L[1]$ in $\cT$ with $L, N \in \cS_\cT(\fp)$.
Then there are elements $a, b \in R\setminus \fp$ such that  $a 1_L = 0$ and $b 1_N = 0$ in $\cT$.
Since $g (b 1_M) = (b 1_N) g = 0$, there is a morphism $u: M \to L$ such that $f u = b 1_M$.
Therefore $(ab) 1_M = a(fu) = f(a 1_L)u = 0$ and hence $M \in \cS_\cT(\fp)$.

(b)
Set $C:=C(a 1_M)$ and prove $C \in \cS_\cT(\fp)$.
For any object $N \in \cT$, the triangle $M \xrightarrow{a 1_M} M \to C \to M[1]$ induces an exact sequence
\begin{align*}
\Hom_\cT(N, M)_\fp \xrightarrow{\overset{a}{\cong}}  &\Hom_\cT(N, M)_\fp \to \Hom_\cT(N, C)_\fp \\
&\to \Hom_\cT(N, M[1])_\fp \xrightarrow{\overset{a}{\cong}} \Hom_\cT(N, M[1])_\fp
\end{align*}
of $R_\fp$-modules.
Thus we get $\Hom_\cT(N, C)_\fp = 0$ for any $N \in \cT$.
Taking $N =C$, we conclude that $1_{C} = 0 \mbox{ in } \Hom_\cT(C, C)_\fp $.

(c)
Thanks to (b), the canonical functor $Q: \cT \to \cT/\cS_\cT(\fp)$ induces a homomorphism
$$
\Hom_{\cT}(M, N)_\fp \to \Hom_{\cT/\cS_\cT(\fp)}(M, N),\quad f/a \mapsto Q(a1_N)^{-1}Q(f)
$$
of $R$-modules.

Let $f \in \Hom_\cT(M, N)$ with $Q(f) = 0$ in $\cT/\cS_\cT(\fp)$.
Then there is a morphism $s: N \to L$ in $\cT$ such that $sf=0$ in $\cT$ and $C(s) \in \cS_\cT(\fp)$.
Pick an element $a \in R\setminus \fp$ with $a 1_{C(s)} = 0$.
Then $a 1_N$ factors as $a 1_N: N \xrightarrow{s} L \xrightarrow{t} N$.
This shows that $af = (a1_N)f = tsf = 0$ and hence the above homomorphism is injective.

Let $\alpha: M \to N$ be a morphism in $\cT/\cS_\cT(\fp)$.
This morphism is given by $\alpha = Q(s)^{-1}Q(f)$ for some morphisms $f: M \to L$ and $s: N \to L$ in $\cT$ with $C(s) \in \cS_\cT(\fp)$.
Take an element $a \in R \setminus \fp$ such that $a 1_{C(s)}= 0$.
Then $a 1_{N}$ factors as $a 1_{N}:N \xrightarrow{s} L \xrightarrow{t} N$.
Using this factorization, we get $\alpha = Q(s)^{-1}Q(f) = Q(ts)^{-1} Q(tf) = Q(a1_N)^{-1}Q(tf)$, which belongs to the image of the homomorphism.
This shows the surjectivity of the homomorphism.
\end{proof}

Concerning our $R$-linear triangulated categories $\dpf(R)$, $\db(R)$, and $\ds(R)$ for a commutative noetherian ring $R$, the above lemma is translated into the following statement.

\begin{lem}\label{keylem}
Let $R$ be a commutative noetherian ring and $\fp$ be a prime ideal of $R$. 
\begin{enumerate}[\rm(1)]
\item
There is a triangle equivalence
$$
\dpf(R)/\tpf(\fp) \xrightarrow{\cong} \dpf(R_\fp),
$$
where $\tpf(\fp) := \{M \in \dpf(R) \mid M_\fp \cong 0 \mbox{ in } \dpf(R_\fp)\}$.
\item	
There is a triangle equivalence
$$
\db(R)/\tb(\fp) \xrightarrow{\cong} \db(R_\fp),
$$
where $\tb(\fp) := \{M \in \db(R) \mid M_\fp \cong 0 \mbox{ in } \db(R_\fp)\}$.
\item
Assume further that $R$ is a Gorenstein ring of finite Krull dimension.
Then there is a triangle equivalence
$$
\ds(R)/\ts(\fp) \xrightarrow{\cong} \ds(R_\fp),
$$
where $\ts(\fp) := \{M \in \ds(R) \mid M_\fp \cong 0 \mbox{ in } \ds(R_\fp)\}$.
\end{enumerate}
\end{lem}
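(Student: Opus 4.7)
The plan is to apply Lemma \ref{loc} uniformly in the three settings $\cT\in\{\dpf(R),\db(R),\ds(R)\}$ and to match its thick subcategory $\cS_\cT(\fp)$ with the corresponding ``localization kernel'' $\tpf(\fp)$, $\tb(\fp)$, $\ts(\fp)$. In each case the base-change functor $L_\fp\colon\cT\to\cT(R_\fp)$ sending $M$ to $M_\fp$ kills $\cS_\cT(\fp)$, because $a 1_M=0$ with $a\in R\setminus\fp$ forces $1_{M_\fp}=0$ as $a\in R_\fp^\times$. By the universal property of the Verdier quotient, $L_\fp$ descends to a triangulated functor $\overline{L}_\fp\colon\cT/\cS_\cT(\fp)\to\cT(R_\fp)$, and the theorem amounts to the statement that $\overline{L}_\fp$ is a triangle equivalence once $\cS_\cT(\fp)$ is identified with the localization kernel.

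Full faithfulness of $\overline{L}_\fp$ follows from Lemma \ref{loc}(c) combined with the standard flat base-change isomorphism
\[
\Hom_\cT(M,N)_\fp \;\cong\; \Hom_{\cT(R_\fp)}(M_\fp,N_\fp).
\]
For $\dpf$ this is classical flat base change for compact objects; for $\db(R)$ one reduces via the canonical truncation filtration to the familiar fact $\Hom_R(M,N)_\fp\cong\Hom_{R_\fp}(M_\fp,N_\fp)$ for finitely generated $M$; for $\ds(R)$ I would invoke Buchweitz's equivalence $\ds(R)\simeq\underline{\mathrm{CM}}(R)$, valid because $R$ is Gorenstein of finite Krull dimension, so Hom groups become finitely generated $R$-modules $\underline{\Hom}_R(M,N)$ whose formation commutes with localization. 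Essential surjectivity is the easy half: perfect complexes over $R_\fp$ are bounded complexes of finitely generated free $R_\fp$-modules obtained from $R$ by clearing denominators in the differentials; bounded complexes of finitely generated $R_\fp$-modules lift likewise term-by-term; and essential surjectivity onto $\ds(R_\fp)=\db(R_\fp)/\dpf(R_\fp)$ is inherited from the $\db$-case.

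The main obstacle is the identification $\cS_\cT(\fp)=\text{(localization kernel)}$: the inclusion $\subseteq$ is immediate, but given $M$ with $M_\fp\cong 0$ one needs a single $a\in R\setminus\fp$ with $a 1_M=0$ in $\cT$. For $\db(R)$, and $\dpf(R)$ as a subcase, I would choose $a_i\in R\setminus\fp$ annihilating each of the finitely many cohomology modules $H^i(M)$, form their product $a$, and run an induction on amplitude using the triangles $\tau^{<k}M\to M\to H^k(M)[-k]\to\tau^{<k}M[1]$: at each step $a 1_{H^k(M)[-k]}=0$ allows one to factor $a 1_M$ through $\tau^{<k}M\to M$, and the induction hypothesis consumes one further power of $a$, yielding $a^{n+1}1_M=0$ where $n$ is the amplitude. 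For $\ds(R)$ the Gorenstein hypothesis is essential: via Buchweitz one represents $M$ by a maximal Cohen--Macaulay module so that $\Hom_{\ds(R)}(M,M)=\underline{\Hom}_R(M,M)$ is a finitely generated $R$-module whose localization $\underline{\Hom}_{R_\fp}(M_\fp,M_\fp)$ vanishes by hypothesis, and finite generation then delivers a single $a\in R\setminus\fp$ with $a 1_M=0$ in $\ds(R)$.
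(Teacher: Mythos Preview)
Your approach is correct and matches the paper's: both invoke Lemma~\ref{loc} together with the Hom-localization isomorphism $\Hom_\cT(M,N)_\fp\cong\Hom_{\cT(R_\fp)}(M_\fp,N_\fp)$ to obtain full faithfulness, and then verify essential surjectivity case by case. The one efficiency you miss is that your ``main obstacle''---identifying $\cS_\cT(\fp)$ with the localization kernel---follows in one line from that same Hom isomorphism taken with $M=N$ (if $M_\fp\cong 0$ then $\Hom_\cT(M,M)_\fp=0$, so some $a\in R\setminus\fp$ kills $1_M$), which is exactly how the paper handles it; your amplitude induction is correct but unnecessary.
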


\begin{proof}
Let $\cD \in \{\dpf, \db, \ds\}$, and $\cS(\fp) \in \{\tpf(\fp), \tb(\fp), \ts(\fp)\}$ the corresponding thick subcategory of $\cD(R)$.	
Then there is an isomorphism 
\begin{align}\tag{$*$}\label{iso}
\Hom_{\cD(R)}(M, N)_\fp \xrightarrow{\cong} \Hom_{\cD(R_\fp)}(M_\fp, N_\fp),\quad f/a \mapsto a^{-1}f_\fp
\end{align}
of $R_\fp$-modules for any $M, N \in \cD(R)$.
Indeed, for $\cD(R) = \dpf(R)$ or $\cD(R) = \db(R)$, see \cite[Lemma 5.2(b)]{AF}.
For $\cD(R) = \ds(R)$, since $R$ is Gorenstein, $\cD(R)$ is equivalent to the stable category of maximal Cohen-Macaulay $R$-modules by \cite[Theorem 4.4.1(2)]{Buc}.
Then (\ref{iso}) follows from \cite[Lemma 3.9]{Yos} for example.
From the isomorphism (\ref{iso}), one has $\cS_{\cD(R)}(\fp) = \cS(\fp)$.
Furthermore, the isomorphism (\ref{iso}) is equal to the composition
$$
\Hom_{\cD(R)}(M, N)_\fp \xrightarrow{\cong} \Hom_{\cD(R)/\cS(\fp)}(M, N) \to \Hom_{\cD(R_\fp)}(M_\fp, N_\fp),
$$
where the first map is the one given in Lemma \ref{loc}(c) and the second one is induced from the localization at $\fp$.
Therefore, the second map is an isomorphism and hence the localization functor $\cD(R) \to \cD(R_\fp)$ induces a fully faithful functor
$$
\cD(R)/\cS(\fp) \to \cD(R_\fp),\quad M \mapsto M_\fp
$$
As a result, it is enough to check that the localization functor $\cD(R) \to \cD(R_\fp)$ is essentially surjective in each case.

(1)
Note that $\dpf(R) \cong \kb(R)$, where $\kb(R)$ is the homotopy category of bounded complexes of finitely generated projective $R$-modules.
Let $P \in \kb(R_\fp)$. 
Then we shall construct $\widetilde{P} \in \kb(R)$ such that $\widetilde{P}_\fp \cong P$ in $\kb(R_\fp)$.
By taking shifts, we may assume that $P := (0 \to P_r \xrightarrow{d_r} P_{r-1} \xrightarrow{d_{r-1}} \cdots \xrightarrow{d_1} P_0 \to 0)$.
Because each component $P_i$ of $P$ is a free $R_\fp$-module, each differential $d_i: P_i \to P_{i-1}$ is given by a matrix with entries in $R_\fp$.
Take $s \in R \setminus \fp$ so that $s\alpha$ belongs to the image of $R$ in $R_\fp$ for all entries $\alpha$ of differentials of $P$.
Then for each $i$, $s d_i$ comes from a homomorphism $\widetilde{d}_i: \widetilde{P}_i \to \widetilde{P}_{i-1}$ between free $R$-modules, i.e., $(\widetilde{P}_i)_\fp = P_i$ and $(\widetilde{d}_i)_\fp = sd_i$.
Since $(\widetilde{d}_{i-1})_\fp (\widetilde{d}_{i})_\fp = s^2d_{i-1} d_{i} =0$, one can find an element $t \in R \setminus \fp$ such that $t\widetilde{d}_i \widetilde{d}_{i-1} =0$ for all $i$.
In particular, we obtain a bounded complex
$$
\widetilde{P} := (0 \to \widetilde{P}_r \xrightarrow{t\widetilde{d}_{r}} \widetilde{P}_{r-1} \xrightarrow{t\widetilde{d}_{r-1}} \cdots \to \widetilde{P}_1  \xrightarrow{t\widetilde{d}_{1}} \widetilde{P}_0 \to 0)
$$
of finitely generated free $R$-modules such that 
$$
\widetilde{P}_\fp = (0 \to P_r \xrightarrow{st d_{r}} P_{r-1} \xrightarrow{st d_{r-1}} \cdots \to P_1 \xrightarrow{st d_{1}} P_0 \to 0).
$$
This $\widetilde{P}_\fp$ is isomorphic to $P$ as complexes of $R_\fp$-modules. 
In fact, there is a commutative diagram
$$
\begin{CD}
0 @>>> P_{r} @>std_{r}>>	 P_{r-1} @>std_{r-1}>> \cdots @>std_{3}>> P_2  @>std_2>> P_1 @>std_1>> P_0 @>>>0 \\
& & @VV(st)^rV @VV(st)^{r-1}V & & @VV(st)^2V @VVstV @| \\ 
0 @>>> P_{r} @>d_{r}>> P_{r-1} @>d_{r-1}>> \cdots @>d_3>> P_2  @>d_2>> P_1 @>d_1>> P_0 @>>> 0 
\end{CD}
$$
whose vertical arrows are all isomorphisms of $R_\fp$-modules.

(2)
Let $M \in \db(R_\fp)$. 
Taking a truncation of a projective resolution of $M$ gives rise to a triangle
$$
N[-1] \xrightarrow{f} P \to M \to N
$$
in $\db(R_\fp)$ with $N$ finitely generated $R_\fp$-module and $P \in \kb(R_\fp)$.
Pick a finitely generated $R$-module $\widetilde{N}$ and $\widetilde{P} \in \kb(R)$ together with isomorphisms $\varphi: \widetilde{N}_\fp \xrightarrow{\cong} N$ and $\psi:\widetilde{P}_\fp \xrightarrow{\cong} P$.
The existence of such an $\widetilde{N}$ is easy and that of a $\widetilde{P}$ is by (1).
As we have remarked as (\ref{iso}), there is an isomorphism
$$
\Hom_{\db(R)}(\widetilde{N}[-1], \widetilde{P})_\fp \xrightarrow{\cong} \Hom_{\db(R_\fp)}(\widetilde{N}_\fp[-1], \widetilde{P}_\fp), \quad g/a\mapsto a^{-1}g_\fp .
$$
Therefore, the composition $\psi^{-1}f\varphi[-1]: \widetilde{N}_\fp[-1] \to \widetilde{P}_\fp$ is equal to $a^{-1}g_\fp$ for some $g \in \Hom_{\db(R)}(\widetilde{N}[-1], \widetilde{P})$ and $a \in R \setminus \fp$.
Embedding $g$ into a triangle
$$
\widetilde{N}[-1] \xrightarrow{g} \widetilde{P} \to \widetilde{M} \to \widetilde{N}
$$
in $\db(R)$, we obtain an object $\widetilde{M} \in \db(R)$ with $\widetilde{M}_\fp \cong M$.
Indeed, localizing this triangle at $\fp$, we obtain a commutative diagram

$$
\xymatrix{
\widetilde{N}_\fp[-1] \ar[r]^{g_\fp} \ar[d]_{a \varphi[-1]}^{\cong} & \widetilde{P}_\fp \ar[r] \ar[d]_{\psi}^{\cong} & \widetilde{M}_\fp \ar[r] \ar@{.>}[d] & \widetilde{N}_\fp \ar[d]_{a \varphi}^{\cong} \\
N[-1] \ar[r]^f & P \ar[r] & M \ar[r] & N 
}
$$
in $\db(R_\fp)$ and hence there is a dotted arrow which is an isomorphism in $\db(R_\fp)$.

(3)
We note that every object of $\ds(R_\fp)$ is isomorphic in $\ds(R_\fp)$ to a finitely generated $R_\fp$-module.
It is isomorphic to a localization of some finitely generated $R$-module. 
\end{proof}

Recall that a commutative noetherian local ring $(R, \fm)$ is a {\it complete intersection (of codimension $c$)} if its $\fm$-adic completion is isomorphic to a regular local ring modulo a regular sequence (of length $c$).
We note that complete intersection local rings of codimension $1$ coincide with hypersurface local rings.

Now, let us first study prime thick subcategories of the perfect derived category $\dpf(X)$ and the bounded derived category $\db(X)$ of a noetherian scheme $X$.
The next proposition gives a characterization of complete intersection local rings in terms of prime thick subcategories of the bounded derived category.

\begin{prop}\label{lem1}
Let $(R,\fm)$ be a commutative noetherian local ring.
Then $\mathbf{0}$ is a prime thick subcategory of $\db(R)$ if and only if $R$ is a complete intersection.
\end{prop}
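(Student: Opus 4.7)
The plan is to reformulate the primeness of $\mathbf{0}$ via the intersection $\cC := \bigcap_{\mathbf{0}\ne\cX\in\Th(\db(R))}\cX$: one has $\mathbf{0}\in\tsp(\db(R))$ if and only if $\cC\ne\mathbf{0}$, in which case $\cC$ is itself the unique minimal nonzero thick subcategory. My candidate for $\cC$ is $\mathrm{thick}_{\db(R)}(K)$, where $K=K(\underline{x};R)$ is the Koszul complex on a system of parameters $\underline{x}$ of $R$; by the argument in the proof of Corollary \ref{cor}(1)---thick subcategories of $\dpf(R)$ over a local ring are automatically thick tensor ideals---together with Thomason's classification, $\mathrm{thick}(K)=\dpf(R)_{\{\fm\}}$ is the unique minimal nonzero thick subcategory of $\dpf(R)$.

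For the ``if'' direction ($R$ CI $\Rightarrow\mathbf{0}$ prime), I would show $K\in\mathrm{thick}_{\db(R)}(M)$ for every nonzero $M\in\db(R)$. If $M\in\dpf(R)$ is nonzero, then its cohomological support contains $\fm$ (since $R$ is local and $M\not\simeq 0$), so Thomason's classification gives $K\in\mathrm{thick}_{\dpf(R)}(M)$ immediately. If $M\notin\dpf(R)$, the CI hypothesis enters essentially: since CI rings are Gorenstein, a Gorenstein-projective approximation produces a triangle $G\to M\to P\to G[1]$ with $P$ perfect and $G$ maximal Cohen--Macaulay, and the CI-specific Eisenbud--Gulliksen cohomology operators $\chi_1,\ldots,\chi_c\in\mathrm{Ext}^2_R(G,G)$ (with $c=\codim R$) can be iteratively coned on $G$ to produce a nonzero perfect complex inside $\mathrm{thick}_{\db(R)}(M)$, reducing to the previous case.

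For the converse I argue contrapositively: if $R$ is not CI, then by Gulliksen's theorem $\mathrm{cx}_R(k)=\infty$, so the Betti numbers of $k$ grow super-polynomially. Using this, I would exhibit two nonzero thick subcategories of $\db(R)$ with trivial intersection---for example, $\mathrm{thick}(K)$ together with $\mathrm{thick}(N)$ for a carefully chosen MCM module $N$ whose support-variety-type invariants force the two thick hulls apart---whence $\cC=\mathbf{0}$ and no unique minimum exists.

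The main obstacle is the non-perfect case in the ``if'' direction: producing a nonzero perfect complex inside $\mathrm{thick}_{\db(R)}(M)$ for arbitrary nonzero non-perfect $M$ under the CI assumption. This is where support-variety theory for complete intersections is essential, and it is precisely the feature that breaks down for non-CI rings, giving the dichotomy that the proposition asserts.
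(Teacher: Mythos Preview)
Your reformulation via the intersection $\cC=\bigcap_{\mathbf{0}\ne\cX}\cX$ is correct, and your candidate $\mathrm{thick}(K)=\sup_{\dpf(R)}^{-1}(\{\fm\})$ for the minimal nonzero thick subcategory is exactly what the paper uses. The paper, however, does not attempt to prove either direction by hand: it simply invokes Pollitz's characterization \cite[Theorem~5.2]{Pol}, which says that $R$ is a complete intersection if and only if $\mathrm{thick}_{\db(R)}(M)$ contains a nonzero perfect complex for \emph{every} nonzero $M\in\db(R)$. Combined with Neeman's classification of thick subcategories of $\dpf(R)$, this gives both implications in two lines. Your proposal is, in effect, an attempt to reprove Pollitz's theorem.

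For the ``if'' direction your sketch is in the right spirit---MCM approximation followed by coning on the Eisenbud--Gulliksen operators is indeed the mechanism underlying Pollitz's argument---but you have not addressed why the iterated cone remains \emph{nonzero}; this requires genuine control over the support variety and is not automatic.

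The ``only if'' direction, however, has a real gap. You propose to use $\mathrm{cx}_R(k)=\infty$ and ``support-variety-type invariants'' to find a nonzero $N$ with $\mathrm{thick}(N)\cap\mathrm{thick}(K)=\mathbf{0}$. But support varieties in the Avramov--Buchweitz sense are only available over complete intersections, so they cannot be invoked when $R$ is not CI. Moreover, infinite complexity of $k$ is of no direct help: $\mathrm{thick}_{\db(R)}(k)$ is the subcategory of complexes with finite-length cohomology, which \emph{always} contains $K$, regardless of whether $R$ is CI. Producing a nonzero $M$ with $\mathrm{thick}(M)\cap\dpf(R)=\mathbf{0}$ when $R$ is not CI is precisely the difficult and novel content of Pollitz's paper (it uses the derived fibre and a Koszul-duality argument, not classical support varieties), and your outline does not supply an alternative route to it.
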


\begin{proof}
Assume that $\mathbf{0}$ is a prime thick subcategory of $\db(R)$ and let $\cX$ be a unique minimal non-zero thick subcategory of $\db(R)$.
The minimality of $\cX$ shows that $\cX \subseteq \dpf(R)$ and in particular $\cX$ contains a non-zero perfect complex.
For any non-zero object $M \in \db(R)$, denote by $\mathrm{thick} (M)$ the smallest thick subcategory of $\db(R)$ containing $M$.
Since $M$ is non-zero, again using the minimality of $\cX$, we get $\cX \subseteq \mathrm{thick} (M)$.
This shows that $\mathrm{thick} (M)$ contains a non-zero perfect complex for any non-zero object $M \in \db(R)$.
Consequently, the `only if' part follows by the result \cite[Theorem 5.2]{Pol}.

To show the `if part', assume that $R$ is a complete intersection.
The combination of \cite[Theorem 1.5]{Nee} and \cite[Theorem 5.2]{Pol} yields that every non-zero thick subcategory $\cX$ of $\db(R)$ contains the non-zero thick subcategory $\sup_{\dpf(R)}^{-1}(\{\fm\}):= \{M \in \dpf(R) \mid \sup_R(M) \subseteq \{\fm\}\}$.
Hence, $\sup_{\dpf(R)}^{-1}(\{\fm\})$ is a unique minimal non-zero thick subcategory of $\db(R)$. 
\end{proof}

Now we are ready to prove one of the main theorems in this paper, which provides examples of prime thick subcategories.

\begin{thm}\label{thder}
Let $X$ be a noetherian scheme and $x \in X$.
\begin{enumerate}[\rm(1)]
\item
The thick subcategory
$$
\tpf_X(x) := \{\cF \in \dpf(X) \mid \cF_x \cong 0 \mbox{ in } \dpf(\cO_{X,x})\}
$$
is a prime thick subcategory of $\dpf(X)$.
\item
Set
$$
\tb_X(x) := \{\cF \in \db(X) \mid \cF_x \cong 0 \mbox{ in } \db(\cO_{X,x})\}.
$$
Then $\tb_X(x)$ is a prime thick subcategory of $\db(X)$ if and only if $\cO_{X,x}$ is a complete intersection.
\end{enumerate}	
\end{thm}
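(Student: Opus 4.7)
The plan is to reduce both parts to the behavior of $\mathbf{0}$ in $\dpf(\cO_{X,x})$ and $\db(\cO_{X,x})$ via the stalk functor, and then invoke the properties of these local categories already established in the paper. To set up the reduction, fix an affine open neighborhood $U = \Spec R$ of $x$ in $X$, and let $\fp \in \Spec R$ be the prime corresponding to $x$, so that $\cO_{X,x} = R_\fp$; write $Z := X \setminus U$.

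The main technical step will be to construct, in each of the perfect and bounded cases, a triangle equivalence up to direct summands
$$
\dpf(X)/\tpf_X(x) \xrightarrow{\cong} \dpf(\cO_{X,x}) \quad \text{and} \quad \db(X)/\tb_X(x) \xrightarrow{\cong} \db(\cO_{X,x})
$$
induced by the stalk functor $\cF \mapsto \cF_x$. For the perfect case, the plan is to compose the Thomason--Trobaugh localization $\dpf(X)/\dpf_Z(X) \xrightarrow{\cong} \dpf(R)$ (a triangle equivalence up to direct summands) with the equivalence $\dpf(R)/\tpf(\fp) \xrightarrow{\cong} \dpf(R_\fp)$ of Lemma \ref{keylem}(1), noting that the preimage of $\tpf(\fp)$ under the restriction functor is exactly $\tpf_X(x)$. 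The bounded case proceeds in parallel, using the corresponding localization for bounded derived categories of coherent sheaves together with Lemma \ref{keylem}(2).

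Given these equivalences, Propositions \ref{quot}(2) and \ref{idm}(2) translate primeness of $\tpf_X(x)$ (respectively $\tb_X(x)$) into primeness of $\mathbf{0}$ in $\dpf(\cO_{X,x})$ (respectively $\db(\cO_{X,x})$). For part (1), $\mathbf{0}$ is always prime in $\dpf(S)$ for any noetherian local ring $(S, \fm)$: by the Hopkins--Neeman classification invoked in the proof of Proposition \ref{lem1}, thick subcategories of $\dpf(S)$ correspond bijectively to specialization-closed subsets of $\Spec S$, and the singleton $\{\fm\}$ is the unique minimal non-empty such subset, yielding a unique minimal non-zero thick subcategory. For part (2), Proposition \ref{lem1} gives directly that $\mathbf{0}$ is prime in $\db(\cO_{X,x})$ if and only if $\cO_{X,x}$ is a complete intersection. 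The main obstacle is the first step, namely justifying the triangle equivalences up to direct summands displayed above; the bounded analogue in particular requires a Thomason-type localization statement for $\db(\coh X)$ along an open subscheme, which is more delicate than its perfect counterpart.
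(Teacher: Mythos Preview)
Your proposal is correct and follows essentially the same route as the paper: reduce to an affine open via a localization theorem for the derived category along $U \hookrightarrow X$, then reduce to the local ring via Lemma~\ref{keylem}, and finally invoke Hopkins--Neeman for part (1) and Proposition~\ref{lem1} for part (2), with Propositions~\ref{quot}(2) and~\ref{idm}(2) transporting primeness across these steps.

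Two minor remarks on organization and expectations. First, the paper does not attempt to build a single composite equivalence $\dpf(X)/\tpf_X(x) \to \dpf(\cO_{X,x})$ up to direct summands; instead it works at the level of lattices of thick subcategories, using Proposition~\ref{idm}(1) to identify $\Th(\dpf(X)/\mathrm{D}_Z^{\mathrm{perf}}(X))$ with $\Th(\dpf(U))$ and tracking $\tpf_X(x)$ through this identification. This sidesteps the (manageable but fiddly) issue of passing to a further Verdier quotient after an equivalence that is only up to direct summands. Second, your intuition about relative difficulty is reversed: the bounded case is in fact cleaner, since by \cite[Theorem~3.3.2]{Sch} the restriction $\db(X)/\mathrm{D}_Z^{\mathrm{b}}(X) \to \db(U)$ is a genuine triangle equivalence, whereas in the perfect case one only has an equivalence up to direct summands (via \cite[Theorem~2.13]{Bal02}), which is precisely why Proposition~\ref{idm} is needed there.
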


\begin{proof}
(1)
Let $U \subseteq X$ be an affine open neighborhood of $x$ with complement $Z := X \setminus U$. 
It follows from \cite[Theorem 2.13]{Bal02} that the restriction functor $(-)|_U: \dpf(X) \to \dpf(U)$ induces a triangle equivalence
$$
\dpf(X)/\mathrm{D_{\rm Z}^{perf}}(X) \to \dpf(U)
$$
up to direct summands, where $\mathrm{D_{\it Z}^{perf}}(X) := \{\cF \in \dpf(X) \mid \cF|_U \cong 0 \mbox{ in } \dpf(U)\}$.
On the other hand, $\mathrm{D_{\it Z}^{perf}}(X) \subseteq \tpf_X(x)$ follows from their definitions.
By Proposition \ref{idm}(1),  we have the following lattice isomorphism
$$
\Th(\dpf(X)/\mathrm{D_{\rm Z}^{perf}}(X)) \cong \Th(\dpf(U))
$$
under which $\tpf_X(x)/\mathrm{D_{\rm Z}^{perf}}(X)$ corresponds to $\tpf_U(x)$.
As a result, we may assume that $X$ is affine by Proposition \ref{quot}(2).

Let $X$ be an affine noetherian scheme. 
From Proposition \ref{quot}(2) and the triangle equivalence $\dpf(X)/\tpf(x) \cong \dpf(\cO_{X,x})$ which is obtained in Lemma \ref{keylem}(1), it is enough to check that $\mathbf{0}$ is a prime thick subcategory of $\dpf(\cO_{X,x})$.
We use the poset isomorphism
$$
\{\cX \in \Th(\dpf(\cO_{X,x})) \mid \cX \neq \mathbf{0}\} \cong \{W \in \Spcl(\Spec \cO_{X,x}) \mid W \neq \emptyset\}
$$ 
which follows from \cite[Theorem 1.5]{Nee}.
Since the right-hand side has a unique minimal element $\{\fm_{X,x}\}$ ($\fm_{X,x}$ is the unique maximal ideal of $\cO_{X,x}$), so does the left-hand side.
This means that $\mathbf{0}$ is a prime thick subcategory of $\dpf(\cO_{X,x})$.

(2)
Let $U \subseteq X$ be an affine open neighborhood of $x$ with complement $Z := X \setminus U$. 
By \cite[Theorem 3.3.2]{Sch}, the restriction functor $(-)|_U: \db(X) \to \db(U)$ induces a triangle equivalence 
$$
\db(X)/\mathrm{D_{\it Z}^b}(X) \xrightarrow{\cong} \db(U),
$$
where $\mathrm{D_{\it Z}^b}(X) := \{\cF \in \db(X) \mid \cF|_U = 0 \mbox{ in } \db(U)\}$.
With the same argument as above, this triangle equivalence and Proposition \ref{quot}(2) allow us to assume that $X$ is affine.
From this observation, the result follows by Propositions \ref{quot}(2), \ref{lem1} and Lemma \ref{keylem}(2).
\end{proof}

Next, we discuss the singularity category of a noetherian scheme and prove a similar result to Theorem \ref{thder}.
To reduce to the affine case, we need the following result. 
Although it may be well known, we give a proof since we have not been able to locate a specific reference.

\begin{prop}\label{resds}
Let $X$ be a separated noetherian scheme and $U$ an affine open subset of $X$ with complement $Z:=X \setminus U$.
Then the restriction functor $(-)|_U: \ds(X) \to \ds(U)$ induces a triangle equivalence
$$
\ds(X)/\mathrm{D_{\mathit{Z}}^{sg}}(X) \xrightarrow{\cong} \ds(U),
$$	
where $\mathrm{D_{\mathit{Z}}^{sg}}(X) := \{\cF \in \ds(X)\mid \cF|_U \cong 0 \mbox{ in $\ds(U)$}\}$.
\end{prop}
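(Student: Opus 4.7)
The strategy is to realize both $\ds(X)/\mathrm{D_{\mathit{Z}}^{sg}}(X)$ and $\ds(U)$ as Verdier quotients of $\db(X)$ by one and the same thick subcategory. Introduce
$$
\cK := \{\cF \in \db(X) \mid \cF|_U \in \dpf(U)\}.
$$
This is a thick subcategory of $\db(X)$ containing $\dpf(X)$ (restriction preserves perfectness) as well as $\mathrm{D_{\mathit{Z}}^{b}}(X) := \{\cF \in \db(X) \mid \cF|_U \cong 0\}$.

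First, I would invoke \cite[Theorem 3.3.2]{Sch}---already used in the proof of Theorem \ref{thder}(2)---which, under the separated and affine hypotheses, yields a triangle equivalence $\db(X)/\mathrm{D_{\mathit{Z}}^{b}}(X) \xrightarrow{\cong} \db(U)$ induced by restriction. Under the lattice correspondence of Proposition \ref{quot}(1), the thick subcategory $\cK \supseteq \mathrm{D_{\mathit{Z}}^{b}}(X)$ is matched with $\dpf(U) \subseteq \db(U)$. Taking one further Verdier quotient then produces
$$
\db(X)/\cK \xrightarrow{\cong} \db(U)/\dpf(U) = \ds(U).
$$

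Next, I would apply Proposition \ref{quot}(1) to the canonical functor $\db(X) \to \ds(X) = \db(X)/\dpf(X)$. Since $\dpf(X) \subseteq \cK$, the thick subcategory $\cK$ corresponds to its image in $\ds(X)$, which equals $\mathrm{D_{\mathit{Z}}^{sg}}(X)$: for $\cF \in \db(X)$, its image in $\ds(X)$ lies in $\mathrm{D_{\mathit{Z}}^{sg}}(X)$ iff $\cF|_U \cong 0$ in $\ds(U)$ iff $\cF|_U \in \dpf(U)$ iff $\cF \in \cK$. The same quotient-of-quotient reasoning then yields
$$
\db(X)/\cK \xrightarrow{\cong} \ds(X)/\mathrm{D_{\mathit{Z}}^{sg}}(X).
$$
Composing the two equivalences produces the desired $\ds(X)/\mathrm{D_{\mathit{Z}}^{sg}}(X) \xrightarrow{\cong} \ds(U)$, and tracing through the construction confirms it is induced by $(-)|_U$.

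The main subtlety is upgrading the lattice isomorphism of Proposition \ref{quot}(1) to the identification of the two iterated Verdier quotients; this is the standard consequence of Verdier localization that whenever $\cK_0 \subseteq \cK \subseteq \cT$ are thick subcategories, the canonical map identifies $\cT/\cK$ with $(\cT/\cK_0)/(\cK/\cK_0)$. Apart from that, the argument is bookkeeping, and the separated and affine hypotheses enter only through Schn\"urer's theorem.
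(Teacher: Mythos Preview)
Your argument is correct and considerably more direct than the paper's. The paper passes through Krause's unbounded stable derived category $\rS(X)$: it uses that $\ds(X)$ is, up to direct summands, the subcategory of compact objects of the compactly generated category $\rS(X)$, invokes results of Krause, Neeman, and Stevenson to identify $\rS(X)^c/\rS_Z(X)^c$ with $\rS(U)^c$ up to direct summands, and then needs an additional argument (essential surjectivity coming from \cite[Theorem 3.3.2]{Sch}) to upgrade the ``up to direct summands'' equivalence to an honest one. Your route avoids the big category entirely: once one has Schlichting's equivalence $\db(X)/\mathrm{D_{\mathit{Z}}^{b}}(X) \cong \db(U)$, everything follows from the third-isomorphism-type fact $(\cT/\cK_0)/(\cK/\cK_0) \simeq \cT/\cK$ for nested thick subcategories, applied twice with the intermediate $\cK = \{\cF \in \db(X) \mid \cF|_U \in \dpf(U)\}$. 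The gain is conceptual economy and no dependence on compact generation or ``up to summands'' bookkeeping; the paper's route, by contrast, situates the statement within the Krause--Stevenson framework for $\rS(X)$. One small correction: the reference \cite{Sch} in this paper is to Schlichting, not Schn\"urer.
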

 
This result is essentially shown by Krause in \cite[Proposition 6.9]{Kra} using the {\it unbounded stable derived category} $\rS(X)$.
Let $X$ be a separated noetherian scheme.
Denote by $\mathrm{K}(\operatorname{\mathrm{Inj}}X)$ the homotopy category of complexes of injective quasi-coherent $\cO_X$-modules, and by $\mathrm{S}(X)$ the subcategory of $\mathrm{K}(\operatorname{\mathrm{Inj}}X)$ consisting of all acyclic complexes.
By the result of \cite[Corollary 5.4]{Kra}, $\mathrm{S}(X)$ is compactly generated triangulated category and there is a triangle equivalence
$$
F:\ds(X) \to \rS(X)^c
$$
up to direct summands, where $(-)^c$ stands for the thick subcategory consisting of all compact objects.

\begin{proof}[Proof of Proposition \ref{resds}]
Let $j:U \hookrightarrow X$ be the open immersion.
Since $j^*: \mathrm{K}(\operatorname{\mathrm{Inj}}X) \to \mathrm{K}(\operatorname{\mathrm{Inj}}U)$ preserves acyclicity, it restricts to an exact functor $j^*: \rS(X) \to \rS(U)$.
By virtue of \cite[Corollary 4.11]{Ste13} and \cite[Lemma 7.2]{Ste14}, $\rS_Z(X)$ is generated by compact objects of $\rS(X)$.
Then it follows from \cite[Corollary 6.9]{Kra} and \cite[Theorem 2.1]{NTTY} that $j^*: \rS(X) \to \rS(U)$ induces a triangle equivalence
$$
\ol{j^*}:\rS(X)^c/\rS_Z(X)^c \to \rS(U)^c
$$
up to direct summands.
Note that by \cite[Theorem 6.6]{Kra}, there is a commutative diagram
\begin{align}\tag{$*$}\label{diag}
\begin{aligned}
\xymatrix{
\mathrm{D_{\mathit{Z}}^{sg}}(X) \ar@{^{(}-_>}[r] \ar[d]_F & \ds(X) \ar[r]^-{j^*} \ar[d]_F & \ds(U) \ar[d]_F \\
\rS_Z(X)^c \ar@{^{(}-_>}[r] & \rS(X)^c \ar[r]^-{j^*} & \rS(U)^c
}
\end{aligned}
\end{align}
of exact functors and hence we get $F^{-1}(\rS_Z(X)^c) = \mathrm{D_{\mathit{Z}}^{sg}}(X)$.

Consider a morphism $f:M' \to F(M)$ in $\rS(X)^c$ with $M' \in \rS_Z(X)^c$ and $M \in \ds(X)$.
By Lemma \ref{cof}, there is an isomorphism $\varphi: M' \oplus M'[1] \xrightarrow{\cong} F(L)$ for some $L\in \ds(X)$.
Since $F(j^*(L))\cong j^*(F(L)) \cong j^*(M' \oplus M'[1]) = 0$, one has $j^*(L) \cong 0$ i.e., $L \in \mathrm{D_{\mathit{Z}}^{sg}}(X)$.
Therefore, $f$ factors as $M' \xrightarrow{\varphi {}^t(1,0)} F(L) \xrightarrow{(f,0)\varphi^{-1}} F(M)$ with $L \in \mathrm{D_{\mathit{Z}}^{sg}}(X)$.
Using \cite[Proposition 10.2.6(ii)]{KaSc}, $F$ induces a triangle equivalence 
$$
\ol{F}: \ds(X)/\mathrm{D_{\mathit{Z}}^{sg}}(X) \to \rS(X)^c/\rS_Z(X)^c 
$$
up to direct summands.
From the commutative diagram (\ref{diag}), we have a commutative diagram
$$
\xymatrix{
\ds(X)/\mathrm{D_{\mathit{Z}}^{sg}}(X) \ar[r]^-{\ol{j^*}} \ar[d]_{\ol{F}} & \ds(U) \ar[d]_F \\
\rS(X)^c/\rS_Z(X)^c \ar[r]^-{\ol{j^*}} & \rS(U)^c
}
$$
of exact functors, where the vertical functors and the bottom $\ol{j^*}$ are triangle equivalences up to direct summands.
Therefore, the top $\ol{j^*}$ is also fully faithful.
On the other hand, $\ol{j^*}: \ds(X)/\mathrm{D_{\mathit{Z}}^{sg}}(X) \to \ds(U)$ is essentially surjective as the restriction functor $j^*: \db(X) \to \db(U)$ is essentially surjective by \cite[Theorem 3.3.2]{Sch}.
Hence we conclude that $\ol{j^*}: \ds(X)/\mathrm{D_{\mathit{Z}}^{sg}}(X) \to \ds(U)$ is a triangle equivalence.
\end{proof}

The next proposition, which is similar to Proposition \ref{lem1}, characterizes hypersurface local rings in terms of prime thick subcategories of the singularity category of a commutative noetherian local ring.

\begin{prop}\label{lem2}
Let $R$ be a non-regular commutative noetherian local ring.
If $R$ is a hypersurface, then $\mathbf{0}$ is a prime thick subcategory of $\ds(R)$.
The converse holds if $R$ is a complete intersection.	
\end{prop}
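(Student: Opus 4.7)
I would treat the two directions separately via classification theorems. For the \emph{if} direction, suppose $R$ is a non-regular local hypersurface. Since $R$ is Gorenstein and $\Spec R$ is affine, the argument in the proof of Corollary \ref{cor}(2) shows that every thick subcategory of $\ds(R)$ is automatically a $\dpf(R)$-submodule, so Stevenson's classification \cite[Theorem 7.7]{Ste14} yields a lattice isomorphism $\Th(\ds(R)) \cong \Spcl(\Sing(R))$. Since $R$ is local and non-regular, $\fm \in \Sing(R)$ and $\fm$ is the unique closed point of $\Spec R$; hence for any non-empty specialization-closed subset $W \subseteq \Sing(R)$ and any $\fp \in W$, one has $\fm \in V(\fp) \cap \Sing(R) \subseteq W$. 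So $\{\fm\}$ is the unique minimal non-empty specialization-closed subset, and its corresponding thick subcategory is the unique minimal non-zero element of $\Th(\ds(R))$. Thus $\mathbf{0}$ is prime.

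For the \emph{only-if} direction under the CI hypothesis, I would argue by contrapositive: assume $R$ is a complete intersection of codimension $c \geq 2$ and show $\mathbf{0}$ is not prime in $\ds(R)$. Invoking classical support variety theory for local complete intersections (Avramov, Gulliksen, Avramov-Buchweitz), every $M \in \ds(R)$ has a homogeneous closed support cone $V_R(M) \subseteq \mathbb{A}^c_k$ such that (a) $V_R(M) = \{0\}$ iff $M \cong 0$ in $\ds(R)$, (b) $V_R(N) \subseteq V_R(M)$ for $N \in \mathrm{thick}_{\ds(R)}(M)$, and (c) every non-zero homogeneous closed cone is realized as $V_R(M)$ for some MCM module (Avramov-Buchweitz realization). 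Since $c \geq 2$, pick two distinct lines $L_1, L_2 \subset \mathbb{A}^c_k$ through the origin, MCMs $M_1, M_2$ with $V_R(M_i) = L_i$, and set $\cX_i := \mathrm{thick}_{\ds(R)}(M_i)$. Both $\cX_i$ are non-zero, and any $N \in \cX_1 \cap \cX_2$ satisfies $V_R(N) \subseteq L_1 \cap L_2 = \{0\}$, forcing $N \cong 0$ in $\ds(R)$; hence $\cX_1 \cap \cX_2 = \mathbf{0}$. Appealing to a classification of $\Th(\ds(R))$ for local CIs (e.g., via Stevenson's tensor-action formalism applied to the cohomology-operator action of $k[\chi_1,\ldots,\chi_c]$) to ensure that every non-zero thick subcategory of $\ds(R)$ contains a minimal non-zero one, were $\mathbf{0}$ prime with unique minimal non-zero superset $\cX_0$, it would sit inside both $\cX_i$, giving $\cX_0 \subseteq \cX_1 \cap \cX_2 = \mathbf{0}$, a contradiction.

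The main obstacle lies in the ``only-if'' direction: the precise invocation of the support variety machinery together with the Avramov-Buchweitz realization theorem, and the step from ``unique minimal non-zero thick subcategory'' to ``contained in every non-zero thick subcategory''---which requires a noetherianity statement for the classifying space of $\Th(\ds(R))$ in the local CI setting. The hypersurface case, by contrast, reduces cleanly to Stevenson's theorem combined with the elementary observation that a local topological space has a unique closed point.
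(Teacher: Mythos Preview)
Your ``if'' direction is essentially the paper's: both reduce to a lattice isomorphism $\Th(\ds(R))\cong\Spcl(\Sing R)$ (the paper cites \cite[Theorem~6.6]{Tak10} directly rather than routing through Stevenson via Corollary~\ref{cor}(2)) and then observe that $\{\fm\}$ is the unique minimal non-empty specialization-closed subset.

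Your ``only-if'' direction differs substantively. The paper does not build the contradiction by hand with support varieties; it simply notes that if $\mathbf{0}$ is prime then the unique minimal non-zero thick subcategory equals the core of $\ds(R)$ (the intersection of all non-zero thick subcategories), so the core is non-zero, and then invokes \cite[Theorem~3.7]{Tak21}, which says that for a local complete intersection a non-zero core forces the ring to be a hypersurface. Your route is in effect an unpacking of what lies behind Takahashi's theorem: the Avramov--Buchweitz realization together with the support-variety formalism is precisely how one shows the core vanishes when $c\ge 2$. The trade-off is that the paper's proof is a one-line citation while yours is self-contained but must nail down the two points you flag---the realization theorem and the passage from ``unique minimal'' to ``minimum''. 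The latter is exactly where you need a classification $\Th(\ds(R))\cong\Spcl(Z)$ with $Z$ noetherian and sober, since in such a lattice every non-zero element dominates a minimal one (any non-empty specialization-closed subset of $Z$ contains a closed point). A cleaner variant of your argument avoids this step altogether: once you have the classification, your $\cX_1,\cX_2$ correspond to distinct closed points of $\mathbb{P}^{c-1}$ and are therefore \emph{themselves} minimal non-zero thick subcategories, immediately contradicting uniqueness. Finally, note that the paper's own sentence ``this $\cX$ must be equal to the intersection of all non-zero thick subcategories'' tacitly uses the same ``unique minimal $\Rightarrow$ minimum'' step you worried about.
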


\begin{proof}
Assume that $R$ is hypersurface.
From \cite[Theorem 6.6]{Tak10}, there is a poset isomorphism
$$
\{\cX \in \Th(\ds(R)) \mid \cX \neq \mathbf{0}\} \cong \{W \in \Spcl(\Sing(R)) \mid W \neq \emptyset\},
$$
which proves the first statement.
The second one follows from 	\cite[Theorem 3.7]{Tak21}.
Indeed, if $\mathbf{0}$ is prime, then there is a unique minimal non-zero thick subcategory $\cX$ of $\ds(R)$.
This $\cX$ must be equal to the intersection of all non-zero thick subcategories of $\ds(R)$.
\end{proof}

In view of the proof of Theorem \ref{thder}, we can deduce the following result from Lemma \ref{keylem}(3) and Propositions \ref{resds}, \ref{lem2}.

\begin{thm}\label{thds}
Let $X$ be a separated Gorenstein scheme and $x \in \Sing(X)$.
If $\cO_{X,x}$ is a hypersurface, then 	
$$
\ts_X(x) := \{\cF \in \ds(X) \mid \cF_x \cong 0 \mbox{ in } \ds(\cO_{X,x})\}
$$
is a prime thick subcategory of $\ds(X)$.
Conversely, if $\cO_{X,x}$ is a complete intersection and $\ts_X(x)$ is a prime thick subcategory of $\ds(X)$, then $\cO_{X,x}$ is a hypersurface.
\end{thm}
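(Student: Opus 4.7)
The plan is to mirror the proof of Theorem \ref{thder} using the two new tools that have just been established: the Verdier localization of $\ds(X)$ along an affine open (Proposition \ref{resds}) and the localization of $\ds(R)$ at a prime ideal (Lemma \ref{keylem}(3)). The endgame is to reduce both implications to the local statement in Proposition \ref{lem2}.

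First I would reduce to the affine case. Pick an affine open neighborhood $U$ of $x$ with complement $Z := X \setminus U$, chosen small enough so that $U = \Spec R$ with $R$ Gorenstein of finite Krull dimension (this is possible because $\cO_{X,x}$ is a Gorenstein local ring, hence finite-dimensional, and we may shrink $U$ around $x$). Since $X$ is separated, Proposition \ref{resds} provides a triangle equivalence
$$
\ds(X)/\mathrm{D_{\mathit{Z}}^{sg}}(X) \xrightarrow{\cong} \ds(U).
$$
Because $x \in U$, the containment $\mathrm{D_{\mathit{Z}}^{sg}}(X) \subseteq \ts_X(x)$ is immediate, and under the equivalence $\ts_X(x)$ corresponds precisely to $\ts_U(x)$, since $\cF_x \cong (\cF|_U)_x$ for $\cF \in \ds(X)$. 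Combining Proposition \ref{quot}(2) (for the Verdier quotient) with Proposition \ref{idm}(2) (for the equivalence) shows that $\ts_X(x)$ is prime in $\ds(X)$ if and only if $\ts_U(x)$ is prime in $\ds(U)$. Thus it suffices to treat the case $X = \Spec R$.

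Next, let $\fp \subseteq R$ be the prime ideal corresponding to $x$. Lemma \ref{keylem}(3) supplies a triangle equivalence
$$
\ds(R)/\ts_X(x) \xrightarrow{\cong} \ds(R_\fp) = \ds(\cO_{X,x}),
$$
and applying Proposition \ref{quot}(2) once more, $\ts_X(x)$ is prime in $\ds(R)$ if and only if $\mathbf{0}$ is prime in $\ds(\cO_{X,x})$. Now both implications drop out of Proposition \ref{lem2}. For the forward direction: if $\cO_{X,x}$ is a hypersurface, then since $x \in \Sing(X)$ it is also non-regular, so Proposition \ref{lem2} yields primality of $\mathbf{0}$. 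For the converse: under the hypothesis that $\cO_{X,x}$ is a complete intersection, Proposition \ref{lem2} tells us that primality of $\mathbf{0}$ in $\ds(\cO_{X,x})$ forces $\cO_{X,x}$ to be a hypersurface.

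The main conceptual obstacle has already been dispatched in Proposition \ref{resds}, which in turn rests on Krause's unbounded stable derived category and on compactly-generated machinery due to Stevenson and Neeman; that result is what permits the reduction to the affine case in the merely separated setting. Once Proposition \ref{resds} and Lemma \ref{keylem}(3) are in hand, the argument is a formal concatenation of two quotient equivalences together with the local characterization of hypersurfaces, exactly parallel to Theorem \ref{thder}. The only minor technical care needed is to arrange that the chosen affine $U$ has finite Krull dimension so that Lemma \ref{keylem}(3) may be invoked, which is harmless by shrinking.
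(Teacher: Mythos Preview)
Your proof is correct and follows exactly the paper's route: reduce to the affine case via Proposition \ref{resds} together with Proposition \ref{quot}(2), pass to the local ring via Lemma \ref{keylem}(3), and finish with Proposition \ref{lem2}. The finite Krull dimension hypothesis you flag for Lemma \ref{keylem}(3) is glossed over in the paper as well, and your shrinking argument does not actually settle it in full generality (think of Nagata's infinite-dimensional noetherian rings, where every local ring is finite-dimensional but no basic affine open is), so strictly speaking one must either impose this as a standing hypothesis on $X$ or remove it from Lemma \ref{keylem}(3).
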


\begin{proof}
Using Proposition \ref{resds}, the same argument as in the proof of Theorem \ref{thder} allows us to reduce to the affine case.
Assume $X$ is an affine scheme.
Then Lemma \ref{keylem}(3) gives us a triangle equivalence
$$
\ds(X)/\ts_X(x) \cong \ds(\cO_{X,x}).
$$
It follows from Proposition \ref{quot}(2) that $\ts_X(x)$ is prime if and only if $\mathbf{0}$ is a prime thick subcategory of $\ds(\cO_{X,x})$.
Then the result follows by Proposition \ref{lem2}.
\end{proof}

For a noetherian scheme $X$, we define the {\it complete intersection locus} $\mathrm{CI}(X)$ and the {\it hypersurface locus} $\mathrm{HS}(X)$ of $X$ by 
\begin{align*}
\mathrm{CI}(X) &:= \{x \in X \mid \cO_{X,x} \mbox{ is a complete intersection} \}, \\
\mathrm{HS}(X) &:= \{x \in \Sing(X) \mid \cO_{X,x} \mbox{ is a hypersurface} \}.
\end{align*}

Applying Theorems \ref{thder} and \ref{thds}, we obtain the following result which has been stated in the introduction.

\begin{cor}\label{cor2}
Let $X$ be a noetherian scheme.
\begin{enumerate}[\rm(1)]
\item
There is an immersion
$$
X \hookrightarrow \tsp(\dpf(X))
$$
of topological spaces.
\item
There is an immersion
$$
\mathrm{CI}(X) \hookrightarrow \tsp(\db(X))
$$
of topological spaces.
\item
If $X$ is a separated Gorenstein scheme, then there is an immersion
$$
\mathrm{HS}(X) \hookrightarrow \tsp(\ds(X))
$$
of topological spaces.
\end{enumerate}	
\end{cor}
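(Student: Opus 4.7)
The plan is to exhibit the maps $\varphi_1(x) := \tpf_X(x)$, $\varphi_2(x) := \tb_X(x)$, and $\varphi_3(x) := \ts_X(x)$; well-definedness on the stated domains is exactly Theorems \ref{thder} and \ref{thds}. Continuity is immediate from the identities $\varphi_i^{-1}(\tsup(\cF)) = \sup_X(\cF) \cap \mathrm{dom}(\varphi_i)$ (and $\ssup_X(\cF) \cap \mathrm{HS}(X)$ for $i = 3$), which describe closed subsets of the respective subspace topologies.

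For the embedding property I would reduce to the affine case. Given an affine open $U = \Spec R \subseteq X$ with complement $Z := X \setminus U$, the restriction functors induce triangle equivalences up to direct summands between the appropriate Verdier quotients and $\dpf(U)$, $\db(U)$, or $\ds(U)$, via \cite[Theorem 2.13]{Bal02}, \cite[Theorem 3.3.2]{Sch}, and Proposition \ref{resds}. Propositions \ref{quot}(3) and \ref{idm}(3) then provide immersions $\tsp(\dpf(U)) \hookrightarrow \tsp(\dpf(X))$ (and analogously for $\db$ and $\ds$), each with image of the form $\{\cP \mid \mathrm{D}_Z(X) \subseteq \cP\}$. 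Composed with the affine-case embeddings $U \hookrightarrow \tsp(\dpf(U))$, $\mathrm{CI}(U) \hookrightarrow \tsp(\db(U))$, and $\mathrm{HS}(U) \hookrightarrow \tsp(\ds(U))$ -- the first and third supplied by Corollary \ref{cor}, and the middle by a parallel Theorem \ref{rcst}-type argument using $\sup_R(R/I)$ to realize closed subsets of $\mathrm{CI}(\Spec R)$ -- this yields that each $\varphi_i|_{\mathrm{dom}(\varphi_i) \cap U}$ is a topological embedding.

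For parts (2) and (3), the global embedding follows by a direct topology match: every closed $V \cap \mathrm{dom}(\varphi_i)$ arises as $\sup_X(\cO_V) \cap \mathrm{dom}(\varphi_i)$ (or its $\ssup_X$-analogue) for the structure sheaf $\cO_V \in \db(X)$ of the reduced closed subscheme supported on $V$. Injectivity is obtained from the affine case -- distinct primes $\fp \neq \fq$ in a common affine open are separated by $R/a$ for any $a \in \fp \setminus \fq$ -- together with the observation that points in different affine opens are separated by the immersions of the second paragraph.

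The main obstacle is part (1): structure sheaves are not perfect complexes in general, so the naive topology-matching argument fails. I would resolve this by invoking Thomason's theorem that the thick tensor ideal $\mathrm{D}_Z^{\mathrm{perf}}(X)$ is generated by a single perfect complex $G$ with $\sup_X(G) = Z$. Consequently $\{\cP \mid \mathrm{D}_Z^{\mathrm{perf}}(X) \subseteq \cP\} = \{\cP \mid G \in \cP\}$ is open in $\tsp(\dpf(X))$, so each $\varphi_1(U)$ is open in $\varphi_1(X)$, and for any $y \in Z$ the non-containment $G \notin \tpf_X(y)$ forces $\varphi_1(y) \notin \varphi_1(U)$. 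Since the $\varphi_1(U)$'s cover $\varphi_1(X)$ and each $\varphi_1|_U$ is a homeomorphism onto its image, $\varphi_1$ is a global topological embedding.
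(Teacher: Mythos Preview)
Your proposal reaches the correct conclusion, and in fact the direct argument in your third paragraph is precisely the paper's proof: the paper simply observes that $\varphi_i^{-1}(\tsup(\cF))$ equals $\sup_X(\cF)$ (restricted to the appropriate domain), notes that the family $\{\sup_X(\cF)\}_{\cF}$ forms a closed basis of that domain, and concludes that the subspace topology agrees with the original one. This is done uniformly for all three cases in a few lines; injectivity comes from the equality $\sup_X(\tpf_X(x))=\{x'\in X\mid x\notin\ol{\{x'\}}\}$ rather than from any affine computation.

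Your treatment is overcomplicated in two respects. First, the affine-reduction scaffolding in your second paragraph is both unnecessary and partially unjustified: Corollary~\ref{cor}(2) requires $X$ to be locally a hypersurface, so it does not supply the embedding $\mathrm{HS}(U)\hookrightarrow\tsp(\ds(U))$ for a general Gorenstein affine open $U$; and the ``Theorem~\ref{rcst}-type argument'' you invoke for $\db(U)$ cannot be run as stated, since $(\Spec R,\sup_R)$ is not a classifying support data for $\db(R)$ when $R$ is not regular. Your third paragraph's direct argument renders the affine reduction superfluous for (2) and (3), so these gaps are harmless, but they should be excised.

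Second, for part (1) you correctly identify Thomason's result as the missing ingredient, but you deploy it inefficiently. Once you know that every closed $Z\subseteq X$ is $\sup_X(G)$ for some perfect complex $G$, you have that $\{\sup_X(\cF)\mid\cF\in\dpf(X)\}$ consists of \emph{all} closed subsets of $X$, and the one-line topology match from your third paragraph applies verbatim. There is no need to pass to affine opens, verify openness of $\varphi_1(U)$ in the image, and glue; the paper does none of this.
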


\begin{proof}
(1)
From the equality $\sup_X(\tpf_X(x)) = \{x' \in X \mid x \not\in \ol{\{x'\}}\}$ and Theorem \ref{thder}(1), there is an injective map
$$
X \hookrightarrow \tsp(\dpf(X)),\quad x \mapsto \tpf_X(x).
$$
For an object $\cF \in \dpf(X)$, one can easily see that the inverse image of $\tsup(\cF)$ by this injection is $\sup_X(\cF)$.
Since the family $\{\sup_X(\cF) \mid \cF \in \dpf(X)\}$ (resp. $\{\tsup(\cF) \mid \cF \in \dpf(X)\}$) forms a closed basis of $X$ (resp. $\tsp(\dpf(X))$), the topology on $X$ is nothing but the one induced from $\tsp(\dpf(X))$.

The remained statements (2) and (3) are shown similarly as above.
\end{proof}

\begin{rem}\label{hsop}\label{op}
If $X$ is an excellent Cohen-Macaulay scheme, then the complete intersection locus $\mathrm{CI}(X)$ and the hypersurface locus $\mathrm{HS}(X)$ of $X$ are open subsets of $X$ and $\Sing(X)$, respectively. 

Indeed, because the problem is local, we may assume that $R$ is an excellent Cohen-Macaulay ring and prove that the complete intersection locus $\mathrm{CI}(R)$ and the hypersurface locus $\mathrm{HS}(R)$ are open in $\Spec R$ and $\Sing R$, respectively.
The openness of the complete intersection locus $\mathrm{CI}(R)$ has been already known in \cite[Corollary 3.3]{GM}.
Therefore, assume that $R$ is an excellent locally complete intersection ring and show that the hypersurface locus $\mathrm{HS}(R)$ is open in $\Sing R$.

Set 
$
\mathrm{H}_1(R) := \{\fp \in \Spec R \mid \codim R_\fp \le 1\}.
$
Since $\mathrm{HS}(R) = \mathrm{H}_1(R) \cap \Sing R$, it suffices to show that $\mathrm{H}_1(R)$ is open in $\Spec R$.
Recall that for a prime ideal $\fp$ of $R$, $\codim R_\fp$ coincides with the {$1$st deviation} $\epsilon_1(R_\fp)$ because $R_\fp$ is a complete intersection; see \cite[Section 21]{Mat}.
Then the openness of $\mathrm{H}_1(R)$ follows from \cite[Proposition 3.6]{Rag}.
\end{rem}

We say that two noetherian schemes $X$ and $Y$ are
\begin{itemize}
\item
{\it perfectly derived equivalent} if $\dpf(X) \cong \dpf(Y)$ as triangulated categories.
\item
{\it derived equivalent} if $\db(X) \cong \db(Y)$ as triangulated categories.

\item	
{\it singularly equivalent} if $\ds(X) \cong \ds(Y)$ as triangulated categories.
\end{itemize}
If $X$ and $Y$ are derived equivalent, then $Y$ is said to be a {\it Fourier-Mukai partner} of $X$.
The following result is a direct consequence of Corollary \ref{cor2} because equivalent triangulated categories have homeomorphic spectra.

\begin{cor}\label{cor3}
Let $X$ be a noetherian scheme.
\begin{enumerate}[\rm(1)]
\item
There is an immersion
$$
Y \hookrightarrow \tsp(\dpf(X))
$$
of topological spaces for any noetherian scheme $Y$ which is perfectly derived equivalent to $X$.
\item
There is an immersion
$$
\mathrm{CI}(Y) \hookrightarrow \tsp(\db(X))
$$
of topological spaces for any noetherian scheme $Y$ which is derived equivalent to $X$.
\item	
Let $X$ be a separated Gorenstein scheme.
There is an immersion 
$$
\mathrm{HS}(Y) \hookrightarrow \tsp(\ds(X))
$$
of topological spaces for any separated Gorenstein scheme $Y$ which is singularly equivalent to $X$.
\end{enumerate}
\end{cor}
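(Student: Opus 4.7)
The plan is to observe that the corollary is essentially a formal composition: for each part, Corollary \ref{cor2} already produces the desired immersion into the spectrum of the appropriate triangulated category attached to $Y$, and it only remains to transport that immersion along a triangle equivalence into the spectrum of the corresponding category attached to $X$.

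More precisely, for part (1), I would start from the immersion $Y \hookrightarrow \tsp(\dpf(Y))$ given by Corollary \ref{cor2}(1). By hypothesis there is a triangle equivalence $F: \dpf(Y) \xrightarrow{\cong} \dpf(X)$; in particular $F$ is a triangle equivalence up to direct summands, so Proposition \ref{idm}(3) yields a homeomorphism ${}^aF: \tsp(\dpf(X)) \xrightarrow{\cong} \tsp(\dpf(Y))$. Composing with $Y \hookrightarrow \tsp(\dpf(Y))$ and taking the inverse of ${}^aF$ gives the desired immersion $Y \hookrightarrow \tsp(\dpf(X))$. Parts (2) and (3) follow by exactly the same recipe, starting from Corollary \ref{cor2}(2) and (3) respectively and applying Proposition \ref{idm}(3) to the triangle equivalences $\db(X) \cong \db(Y)$ and $\ds(X) \cong \ds(Y)$; note that in part (3) the hypothesis that $Y$ is also separated Gorenstein is exactly what is needed to apply Corollary \ref{cor2}(3) to $Y$.

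There is essentially no obstacle here beyond packaging: the only thing to double-check is that the composition of an immersion of topological spaces with a homeomorphism is again an immersion, which is immediate from the definition (injectivity is preserved and the subspace topology on the image transports along the homeomorphism). Since Proposition \ref{idm} is formulated for triangle equivalences up to direct summands, in particular for honest triangle equivalences, no additional hypothesis on $F$ is required. Thus the proof reduces to three one-line arguments, one for each of (1), (2), and (3).
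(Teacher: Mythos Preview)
Your proposal is correct and follows exactly the paper's approach: the paper states that the corollary is a direct consequence of Corollary~\ref{cor2} together with the fact that equivalent triangulated categories have homeomorphic spectra (which is Proposition~\ref{idm}(3)). You have simply spelled out this one-line argument in more detail.
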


\begin{rem}
For two commutative noetherian rings $R$ and $S$, it is known as the derived Morita theorem that there are implications
$$
\mbox{perfectly derived equivalent} \Leftrightarrow \mbox{derived equivalent} \Rightarrow \mbox{singularly equivalent}.
$$

If $X$ and $Y$ are projective schemes over a field and assume that $X$ possesses an ample line bundle satisfying a certain vanishing condition on cohomologies, then the same implications
$$
\mbox{perfectly derived equivalent} \Leftrightarrow \mbox{derived equivalent} \Rightarrow \mbox{singularly equivalent}
$$
hold by \cite[Theorem 7.13]{Ba}.
\end{rem}

\section{Comparison with Balmer spectra}

In this section, we compare the spectrum $\tsp(\cT)$ discussed so far to the Balmer spectrum $\ttsp(\cT)$ for a tensor triangulated category $(\cT, \otimes, \one)$. 
Besides, we discuss more the spectrum of the tensor triangulated category $(\dpf(X), \ltensor_{\cO_X}, \cO_X)$ for a noetherian scheme $X$.
To this end, let us start with a brief survey on Balmer's tensor triangular geometry.

Recall that a triple $(\cT, \otimes, \one)$ is a {\it tensor triangulated category} if $\cT$ is a triangulated category equipped with a symmetric monoidal structure $(\otimes, \one)$ which is compatible with the triangulated structure; see \cite[Appendix A]{HPS} for the precise definition.

\begin{dfn}
Let $(\cT, \otimes, \one)$ be a tensor triangulated category.
\begin{enumerate}[\rm(1)]
\item
A thick subcategory $\cI$ of $\cT$ is called a {\it (thick tensor) ideal} if for $M \in \cT$ and $N \in \cX$ one has $M\otimes N \in \cX$.
\item
For an ideal $\cI$ of $\cT$, define its {\it radical} by 
$$
\sqrt{\cI} :=\{M \in \cT \mid M^{\otimes n} \in \cT \mbox{ for some non-negative integer $n$} \}.
$$
We say that $\cI$ is {\it radical} if $\sqrt{\cI} = \cI$ holds.
The set of radical ideals of $\cT$ is denoted by $\mathbf{Rad}_{\otimes}(\cT)$.
\item
An ideal $\cP$ of $\cT$ is called {\it prime} provided $\cP \neq \cT$ and if $M \otimes N$ is in $\cP$, then so is either $M$ or $N$.	
Denote by $\ttsp (\cT)$ the set of prime ideals of $\cT$.
\end{enumerate}
\end{dfn}

Now, let us define a topology on $\ttsp(\cT)$ whose definition is the same as those of $\tsp(\cT)$, except it uses prime ideals instead of prime thick subcategories.
 
\begin{dfn}[{\cite[Definition 2.1]{Bal05}}]
For a family $\cE$ of objects of $\cT$, we set 
$$
\sZ(\cE) := \{\cP \in \ttsp(\cT) \mid \cP \cap \cE = \emptyset\}.
$$
We can easily check that the family $\{\sZ(\cE) \mid \cE \subseteq \cT\}$ of subsets of $\ttsp(\cT)$ satisfies the axioms for closed subsets.
The set $\ttsp (\cT)$ together with this topology is called the {\it Balmer spectrum} of $\cT$.

For an object $M \in \cT$, define the {\it Balmer support} of $M$ by
$$
\ttsup(M) := \sZ(\{M\}) = \{\cP \in \ttsp(\cT) \mid M \not\in \cP\}.
$$
Then the family of the Balmer supports forms a closed basis of $\ttsp(\cT)$. 
\end{dfn}

Proposition \ref{mt1}, Definition \ref{rad}, and Theorem \ref{cls} should be compared with the following couple of results.

\begin{prop}[{\cite[Proposition 2.9]{Bal05}}]\label{cl}
For a prime ideal $\cP$ of $\cT$, one has
$$
\overline{\{\cP\}} = \{\cQ \in \ttsp(\cT) \mid \cQ \subseteq \cP\}.
$$	
In particular, $\tsp(\cT)$ is a $T_0$-space.
\end{prop}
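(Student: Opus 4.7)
The plan is to mirror the proof of Proposition \ref{mt1} (which handles the same statement for prime thick subcategories), exploiting the fact that the closed basis for $\ttsp(\cT)$ is defined by the exact same recipe $\sZ(\cE)=\{\cP : \cP\cap\cE=\emptyset\}$ as for $\tsp(\cT)$. In particular, the proof will not use the ideal or the primality property in any essential way beyond what is needed to identify the right closed set containing $\cP$.

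First I would establish the inclusion $\{\cQ\in\ttsp(\cT) \mid \cQ\subseteq\cP\}\subseteq\overline{\{\cP\}}$. Take a prime ideal $\cQ\subseteq\cP$ and any closed set $\sZ(\cE)$ containing $\cP$. By definition, $\cP\cap\cE=\emptyset$, so $\cE\subseteq\cT\setminus\cP\subseteq\cT\setminus\cQ$, which forces $\cQ\cap\cE=\emptyset$ and hence $\cQ\in\sZ(\cE)$. Since every closed set containing $\cP$ also contains $\cQ$, we get $\cQ\in\overline{\{\cP\}}$.

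Next, for the reverse inclusion, the key observation is that $\cP\in\sZ(\cT\setminus\cP)$, because $\cP\cap(\cT\setminus\cP)=\emptyset$ trivially. So $\sZ(\cT\setminus\cP)$ is a specific closed subset of $\ttsp(\cT)$ containing $\cP$, hence contains $\overline{\{\cP\}}$. Any $\cQ\in\sZ(\cT\setminus\cP)$ satisfies $\cQ\cap(\cT\setminus\cP)=\emptyset$, i.e.\ $\cQ\subseteq\cP$, which gives the inclusion $\overline{\{\cP\}}\subseteq\{\cQ\in\ttsp(\cT) \mid \cQ\subseteq\cP\}$.

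Combining these two inclusions yields the desired equality. The $T_0$-assertion is then immediate: if $\overline{\{\cP\}}=\overline{\{\cQ\}}$ then $\cP\subseteq\cQ$ and $\cQ\subseteq\cP$ by the formula just proved, so $\cP=\cQ$. I do not foresee any real obstacle; the argument is formal and uses nothing about the tensor structure other than the shape of the defining closed sets, which is why this statement runs in lockstep with the non-tensor version in Proposition \ref{mt1}.
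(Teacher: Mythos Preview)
Your argument is correct. The paper does not give its own proof of this proposition; it simply cites \cite[Proposition 2.9]{Bal05}. Your proof is precisely the standard one (and indeed the one Balmer gives): the inclusion $\supseteq$ follows because every basic closed set $\sZ(\cE)$ is downward closed under inclusion of primes, and the inclusion $\subseteq$ follows by exhibiting the particular closed set $\sZ(\cT\setminus\cP)=\{\cQ\mid\cQ\subseteq\cP\}$. The $T_0$ conclusion is then immediate. Nothing is missing.
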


\begin{prop}[{\cite[Lemma 4.2]{Bal05}}]\label{int}
Let $\cI$ be an ideal of $\cT$.
Then one has the equality
$$
\sqrt{\cI} =\bigcap_{\cI \subseteq \cP \in \ttsp(\cT)} \cP.
$$	
\end{prop}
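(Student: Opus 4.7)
The plan is to mirror the classical commutative‐algebra proof that the radical of an ideal is the intersection of the primes above it, adapted to the tensor‐triangulated setting. I would split the argument into the two inclusions.

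For the inclusion $\sqrt{\cI} \subseteq \bigcap_{\cI \subseteq \cP} \cP$, suppose $M \in \sqrt{\cI}$ and let $\cP$ be any prime ideal with $\cI \subseteq \cP$. By definition $M^{\otimes n} \in \cI \subseteq \cP$ for some $n \geq 1$. Writing $M^{\otimes n} = M \otimes M^{\otimes (n-1)}$ and using the primeness of $\cP$, induction on $n$ forces $M \in \cP$, giving this inclusion.

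For the reverse inclusion I would proceed by contrapositive and a Zorn's lemma argument. Assume $M \notin \sqrt{\cI}$, so that $M^{\otimes n} \notin \cI$ for every $n \geq 1$. Consider the collection
$$
\Sigma := \{\, \cJ \in \Th(\cT) \mid \cJ \text{ is an ideal},\ \cI \subseteq \cJ,\ M^{\otimes n} \notin \cJ \text{ for all } n \geq 1 \,\}.
$$
Then $\Sigma \ni \cI$ is non-empty, and the union of any chain in $\Sigma$ is again an ideal avoiding every tensor power of $M$, hence belongs to $\Sigma$. Zorn's lemma produces a maximal element $\cP \in \Sigma$. By construction $\cI \subseteq \cP$ and $M \notin \cP$, so it only remains to verify that $\cP$ is prime; this will place $\cP$ in the intersection and complete the proof.

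The main obstacle, and the only genuinely non-formal step, is showing that the maximal $\cP$ is prime. Suppose for contradiction that $A \otimes B \in \cP$ while $A, B \notin \cP$. Let $\langle \cP, A \rangle$ denote the smallest thick ideal containing $\cP$ and $A$, and similarly for $B$. Both ideals strictly contain $\cP$, so maximality forces $\langle \cP, A \rangle, \langle \cP, B \rangle \notin \Sigma$, yielding integers $i, j \geq 1$ with $M^{\otimes i} \in \langle \cP, A \rangle$ and $M^{\otimes j} \in \langle \cP, B \rangle$. The crux is then a structural lemma to the effect that
$$
X \in \langle \cP, A \rangle \text{ and } Y \in \langle \cP, B \rangle \ \Longrightarrow\ X \otimes Y \in \langle \cP,\, A \otimes B \rangle,
$$
which one proves by a double thick-subcategory induction: fix $X$ and show the class of $Y$ for which $X \otimes Y \in \langle \cP, A \otimes B \rangle$ is a thick ideal containing $\cP$ and $B$, then vary $X$ by a symmetric argument. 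Applied with $X = M^{\otimes i}$ and $Y = M^{\otimes j}$, this gives $M^{\otimes (i+j)} \in \langle \cP, A \otimes B \rangle \subseteq \cP$, since $A \otimes B \in \cP$. This contradicts $\cP \in \Sigma$, so $\cP$ is prime, and the inclusion $\bigcap_{\cI \subseteq \cP} \cP \subseteq \sqrt{\cI}$ follows.
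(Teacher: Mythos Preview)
Your argument is correct and is precisely Balmer's original proof of \cite[Lemma~4.2]{Bal05}; the paper does not supply its own proof of this statement but simply cites that reference, so there is nothing further to compare.
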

 
A subset of a topological space $X$ is said to be {\it Thomason} if it is a union of closed subsets whose complements are quasi-compact.
We denote by $\Thom(X)$ the set of Thomason subsets of $X$.
It is immediately from the definition that Thomason subsets are specialization-closed.
If $X$ is noetherian, then the converse also holds true i.e., $\Thom(X) = \Spcl(X)$.

\begin{thm}[{\cite[Theorem 4.10]{Bal05}}]\label{bal}
Let $\cT$ be a tensor triangulated category.
Then there is a mutually inverse lattice isomorphisms
$$
\xymatrix{
\mathbf{Rad}_{\otimes}(\cT) \ar@<0.5ex>[r]^-{\ttsup} &
\Thom(\ttsp (\cT)) \ar@<0.5ex>[l]^-{\ttsup^{-1}},
}
$$
where $\ttsup(\cX) := \bigcup_{M \in \cX} \ttsp(M)$ and $\ttsup^{-1}(W) := \{M \in \cT \mid \ttsup(M) \subseteq W\}$.
\end{thm}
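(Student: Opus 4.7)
The plan is to construct the two maps explicitly and verify they are mutually inverse, upgrading to lattice isomorphisms by inspection. Two inputs do the heavy lifting: Proposition \ref{int}, which expresses radicals as intersections of primes, and the standard fact that the basic closed subsets $\ttsup(M)$ are precisely the closed subsets of $\ttsp(\cT)$ having quasi-compact complement. The latter rests on the identities $\ttsup(M) \cap \ttsup(N) = \ttsup(M \otimes N)$ and $\ttsup(M) \cup \ttsup(N) = \ttsup(M \oplus N)$ (the former uses primality, the latter uses thickness) together with quasi-compactness of each basic open $U(M) := \ttsp(\cT) \setminus \ttsup(M)$, which is proved via a Zorn's-lemma/prime-existence argument. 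Consequently every Thomason $W$ can be written as $W = \bigcup_\alpha \ttsup(M_\alpha)$ for some family of objects.

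First I would verify well-definedness. For a radical ideal $\cI$, the set $\ttsup(\cI) = \bigcup_{M \in \cI} \ttsup(M)$ is Thomason by the observation above. For $W \subseteq \ttsp(\cT)$, the preimage $\ttsup^{-1}(W)$ is a thick subcategory by the support-data axioms for $\ttsup$, closed under $\otimes$ thanks to $\ttsup(M \otimes N) \subseteq \ttsup(M) \subseteq W$, and radical because $\ttsup(M^{\otimes n}) = \ttsup(M)$. Next I would show $\ttsup^{-1}(\ttsup(\cI)) = \cI$ for $\cI$ radical. The inclusion $\supseteq$ is tautological. For $\subseteq$, suppose $\ttsup(M) \subseteq \ttsup(\cI)$ and let $\cP$ be a prime with $\cI \subseteq \cP$. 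Since $\cI \subseteq \cP$ forces $\cP \notin \ttsup(N)$ for every $N \in \cI$, we conclude $\cP \notin \ttsup(\cI)$, hence $\cP \notin \ttsup(M)$, i.e., $M \in \cP$. By Proposition \ref{int}, $M \in \bigcap_{\cP \supseteq \cI} \cP = \sqrt{\cI} = \cI$.

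It remains to check $\ttsup(\ttsup^{-1}(W)) = W$ for Thomason $W$. One inclusion is immediate. For the other, writing $W = \bigcup_\alpha \ttsup(M_\alpha)$ using the Thomason structure, each $M_\alpha$ satisfies $\ttsup(M_\alpha) \subseteq W$ and so $M_\alpha \in \ttsup^{-1}(W)$, whence $W \subseteq \ttsup(\ttsup^{-1}(W))$. The lattice structure (preservation of arbitrary intersections on the ideal side, finite joins coming from $\otimes$ and $\oplus$, and inclusion-reversal properties) then follows formally from the identities relating $\ttsup$ with the triangulated and tensor operations. The main obstacle is verifying the quasi-compactness of each $U(M)$, equivalently that Thomason subsets decompose as arbitrary unions of basic closed sets: this is the one genuinely nontrivial ingredient, since it rests on the prime existence theorem for $\otimes$-ideals, proved by Zorn applied to $\otimes$-multiplicatively closed subsets disjoint from a given ideal. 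Granted this, the remainder of the proof is a purely formal manipulation using Proposition \ref{int}.
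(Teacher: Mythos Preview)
The paper does not prove this statement at all: it is quoted verbatim as \cite[Theorem 4.10]{Bal05} and used as a black box. Your outline is a faithful reconstruction of Balmer's original argument, with the two composites handled exactly as in \cite{Bal05} and the radical formula of Proposition \ref{int} playing the same pivotal role; there is nothing to compare on the paper's side beyond the citation.
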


\begin{rem}[cf. Definition \ref{param}]
We remark that by \cite[Proposition 2.14]{Bal05} the equality
$$
\Thom(\ttsp (\cT)) =\{\ttsup(\cX) \mid \subseteq \cX\}.
$$
holds.
\end{rem}

The following result shows that prime ideals of $\cT$ are characterized by a similar condition to the definition of prime thick subcategories.  
Therefore, it justifies our definition of a prime thick subcategory. 
 
\begin{prop}\label{prid}
Let $\cP$ be a radical ideal of $\cT$.
Assume that there is a unique minimal radical thick subcategory $\cI$ of $\cT$ with $\cI \subsetneq \cT$.
Then $\cP$ is a prime ideal.
The converse holds if $\ttsp(\cT)$ is noetherian (e.g., $\cT = \dpf(X)$ for a noetherian scheme).
\end{prop}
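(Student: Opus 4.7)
The plan is to translate both hypothesis and conclusion to the topological side via Balmer's lattice isomorphism between radical ideals of $\cT$ and Thomason subsets of $\ttsp(\cT)$ (Theorem~\ref{bal}), and read the condition ``unique minimal radical ideal strictly above $\cP$'' off the support $\ttsup(\cP) \subseteq \ttsp(\cT)$. The key identity on the support side is the tensor formula $\ttsup(M\otimes N)=\ttsup(M)\cap \ttsup(N)$, which is immediate from the very definition: a prime ideal $\cQ$ contains $M\otimes N$ if and only if it contains $M$ or contains $N$.

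For the ``if'' direction, let $\cI$ be the unique minimal radical ideal strictly above $\cP$, and assume toward a contradiction that $M\otimes N\in \cP$ while $M,N\notin \cP$. Under Theorem~\ref{bal}, the Thomason subsets $\ttsup(\cP)\cup \ttsup(M)$ and $\ttsup(\cP)\cup \ttsup(N)$ correspond to radical ideals strictly containing $\cP$ (strictly, because $M,N\notin \sqrt{\cP}=\cP$), so both contain $\cI$, and in particular any $L\in \cI\setminus \cP$. Hence
$$
\ttsup(L)\subseteq (\ttsup(\cP)\cup \ttsup(M))\cap (\ttsup(\cP)\cup \ttsup(N)) = \ttsup(\cP)\cup \ttsup(M\otimes N) \subseteq \ttsup(\cP),
$$
forcing $L\in \sqrt{\cP}=\cP$, a contradiction.

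For the ``only if'' direction, suppose $\cP$ is prime and $\ttsp(\cT)$ is noetherian. Balmer's spectrum is sober (a standard fact from \cite{Bal05}), so Lemma~\ref{lem} applies. Proposition~\ref{cl} gives
$$
\{\cQ\in \ttsp(\cT)\mid \cP\notin \overline{\{\cQ\}}\}=\{\cQ\mid \cP\not\subseteq \cQ\}=\ttsup(\cP),
$$
verifying condition (ii) of Lemma~\ref{lem} with $x=\cP$. Thus there is a unique minimal specialization-closed subset strictly above $\ttsup(\cP)$; since $\ttsp(\cT)$ is noetherian, specialization-closed coincides with Thomason, and Theorem~\ref{bal} pulls this back to a unique minimal radical ideal strictly above $\cP$. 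The main subtlety I expect is to verify the correspondence step in the ``if'' direction — that $\ttsup(\cP)\cup \ttsup(M)$ really is $\ttsup$ of a radical ideal properly containing $\cP$ whenever $M\notin \cP$ — which is direct from the bijectivity in Theorem~\ref{bal} but deserves to be recorded explicitly. For the example $\cT=\dpf(X)$ with $X$ noetherian, Balmer's reconstruction gives $\ttsp(\dpf(X))\cong X$, so $\ttsp(\cT)$ is indeed noetherian.
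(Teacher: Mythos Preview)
Your argument is correct. The converse direction is essentially identical to the paper's: both identify $\ttsup(\cP)$ with $\{\cQ\mid \cP\notin\overline{\{\cQ\}}\}$ via Proposition~\ref{cl}, apply Lemma~\ref{lem} (you are right to note explicitly that $\ttsp(\cT)$ is sober; the paper uses this silently), and pull back through Theorem~\ref{bal} using that Thomason and specialization-closed agree in the noetherian case.

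For the forward direction you take a genuinely different route. The paper does not pass to the support side at all: it simply invokes Proposition~\ref{int} to write $\cP=\sqrt{\cP}=\bigcap_{\cP\subseteq\cQ\text{ prime}}\cQ$, observes that if $\cP$ were not prime then every such $\cQ$ would properly contain $\cP$ and hence contain $\cI$, whence $\cP\subsetneq\cI\subseteq\sqrt{\cP}=\cP$, a contradiction. This is two lines and uses nothing beyond the description of the radical as an intersection of primes. Your approach instead builds the radical ideals $\ttsup^{-1}(\ttsup(\cP)\cup\ttsup(M))$ and $\ttsup^{-1}(\ttsup(\cP)\cup\ttsup(N))$, squeezes $\cI$ into both, and then uses the identity $\ttsup(M\otimes N)=\ttsup(M)\cap\ttsup(N)$ together with distributivity to force $\ttsup(L)\subseteq\ttsup(\cP)$. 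This is correct (and your parenthetical about checking that $\ttsup(\cP)\cup\ttsup(M)$ really is Thomason and corresponds to a radical ideal strictly above $\cP$ is exactly the right thing to record), but it is heavier machinery for the same conclusion. The paper's argument buys brevity and avoids Theorem~\ref{bal} entirely in this half; your argument has the merit of making the whole proof live uniformly on the topological side.
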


\begin{proof}
Since $\cP$ is a radical ideal, by Proposition \ref{int}, $\cP$ is the intersection of all prime ideals $\cQ$ which contain $\cP$.
If $\cP$ is not a prime ideal, these $\cQ$ properly contain $\cP$ and hence also contain $\cI$ by assumption.
Therefore, we conclude that $\cP \subsetneq \cI \subseteq \sqrt{\cP} = \cP$, a contradiction.
As a result, $\cP$ is a prime ideal.

Assume that $\ttsp(\cT)$ is noetherian and $\cP$ is a prime ideal.
By the lattice isomorphism in Theorem \ref{bal}, $\cP$ corresponds to the specialization-closed subset 
$$
W := \{\cQ \in \ttsp(\cT) \mid \cP \not\subseteq \cQ\} = \{\cQ \in \ttsp(\cT) \mid \cP \not\in \ol{\{\cQ\}}\}.
$$
Here, the second equality follows from Proposition \ref{cl}.
Then Lemma \ref{lem} shows that there is a unique minimal specialization-closed subset $T$ of $\ttsp(\cT)$ with $W \subsetneq T$.
Again using the lattice isomorphism in Theorem \ref{bal}, $\cI:= \ttsup^{-1}(T)$ satisfies the condition in the statement.
\end{proof}

Now we establish the following result which gives a connection between prime ideals and prime thick subcategories.

\begin{prop}\label{pp}
Let $\cT$ be a tensor triangulated category and $\cP$ be a radical ideal of $\cT$.
If $\cP$ is a prime thick subcategory of $\cT$, then it is a prime ideal of $\cT$.
\end{prop}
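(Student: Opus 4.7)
The plan is to combine the primeness of $\cP$ as a thick subcategory with the representation of a radical ideal as the intersection of the prime ideals containing it, which is Proposition \ref{int}.

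First, since $\cP$ is a prime thick subcategory, there exists at least one thick subcategory $\cX$ of $\cT$ with $\cP \subsetneq \cX$, so in particular $\cP \neq \cT$. Let $\cX_0$ denote the unique minimal thick subcategory of $\cT$ with $\cP \subsetneq \cX_0$, so that $\cX_0 \subseteq \cX$ for every thick subcategory $\cX$ properly containing $\cP$. Since $\cP$ is a proper radical ideal, Proposition \ref{int} yields
$$
\cP = \bigcap_{\cP \subseteq \cQ \in \ttsp(\cT)} \cQ,
$$
where the family of prime ideals $\cQ$ is non-empty (the intersection over the empty family would be $\cT$).

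The rest is a short contradiction argument. Suppose $\cP$ is not itself prime as an ideal. Then no prime ideal $\cQ$ occurring in the above intersection can coincide with $\cP$, so each such $\cQ$ is a thick subcategory with $\cP \subsetneq \cQ$. By the minimality of $\cX_0$ granted by the primeness of $\cP$ as a thick subcategory, one has $\cX_0 \subseteq \cQ$ for every such $\cQ$. Intersecting now gives $\cX_0 \subseteq \bigcap \cQ = \cP$, contradicting $\cP \subsetneq \cX_0$. Hence $\cP$ is a prime ideal.

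There is no serious obstacle in this plan: the whole argument is a direct consequence of the two definitions together with Proposition \ref{int}. The only mild point that deserves explicit mention is the non-emptiness of the family of prime ideals containing $\cP$, which is guaranteed by $\cP \neq \cT$. Note also that the argument does not require any use of the tensor structure beyond what is already absorbed into Proposition \ref{int}; the tensorial content is invoked only through that proposition and the definition of a prime ideal.
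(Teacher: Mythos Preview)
Your proof is correct and follows essentially the same approach as the paper: both use the unique minimal thick subcategory above $\cP$ together with Proposition \ref{int} to derive a contradiction. The only difference is cosmetic—the paper first passes to the smallest radical ideal $\widetilde{\cX}$ containing $\cX_0$ and then invokes Proposition \ref{prid}, whereas you inline that argument and apply the minimality of $\cX_0$ directly against the prime ideals $\cQ$ (which are already thick subcategories), making the detour through $\widetilde{\cX}$ unnecessary.
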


\begin{proof}
As $\cP$ is a prime thick	 subcategory of $\cT$, there is a unique minimal thick subcategory $\cX$ of $\cT$ such that $\cX \subsetneq \cT$.
Denote by $\widetilde{\cX}$ the smallest radical ideal of $\cT$ containing $\cX$.
For any radical ideal $\cI$ of $\cT$ with $\cP \subsetneq \cI$, the minimality of $\cX$ shows $\cX \subseteq \cI$ and hence $\widetilde{\cX} \subseteq \cI$.
This means that $\widetilde{\cX}$ is a unique minimal radical ideal of $\cT$ with $\cP \subsetneq \widetilde{\cX}$.
Then Proposition \ref{prid} shows that $\cP$ is a prime ideal.
\end{proof}

For a noetherian scheme $X$, the thick subcategory $\tpf_X(x)$ is a prime ideal, and actually, every prime ideal of $\dpf(X)$ is given in this way; see \cite[Corollary 5.6]{Bal05}.
Therefore, the immersion in Theorem \ref{thder}(1) can be considered as the inclusion
$$
\ttsp(\dpf(X)) \subseteq \tsp(\dpf(X)).
$$
From this observation and Proposition \ref{pp}, we obtain the following corollary.

\begin{cor}\label{twoprm}
Let $X$ be a noetherian scheme and $\cP$ be an ideal of $\dpf(X)$.
Then $\cP$ is a prime thick subcategory of $\dpf(X)$ if and only if $\cP$ is a prime ideal of $\dpf(X)$.	
\end{cor}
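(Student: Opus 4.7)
The plan is to combine two facts already available in the paper. For the backward direction, I would use Balmer's classification of prime ideals of $\dpf(X)$ for a noetherian scheme $X$: by \cite[Corollary 5.6]{Bal05}, every prime ideal of $\dpf(X)$ has the form $\tpf_X(x)$ for some $x \in X$. But Theorem \ref{thder}(1) already asserts that each $\tpf_X(x)$ is a prime thick subcategory of $\dpf(X)$, so this direction is immediate.

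For the forward direction, I would invoke Proposition \ref{pp}. The hypothesis there asks that $\cP$ be a \emph{radical} ideal, not merely an ideal, so the key intermediate step is to verify that for $\dpf(X)$ of a noetherian scheme, every thick tensor ideal is automatically radical. I expect this to be the main (minor) obstacle. There are two natural ways to do it: either compare Balmer's classification (Theorem \ref{bal}, classifying radical ideals by Thomason subsets of $\ttsp(\dpf(X)) \cong X$) with Thomason's classification \cite[Theorem 3.15]{Tho} (classifying all thick tensor ideals of $\dpf(X)$ by specialization-closed, equivalently Thomason, subsets of $X$); since both bijections hit exactly the same set of subsets, every thick tensor ideal must be radical. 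Alternatively, one can argue directly: $\sup_X(M^{\otimes n}) = \sup_X(M)$, so the thick tensor ideal generated by $M$ equals that generated by $M^{\otimes n}$, which gives radicality.

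Once radicality is secured, Proposition \ref{pp} applies verbatim and yields that $\cP$ is a prime ideal, completing the proof. The net structure of the argument is therefore very short: (1) cite \cite[Corollary 5.6]{Bal05} and Theorem \ref{thder}(1) for one direction; (2) verify radicality of every ideal of $\dpf(X)$ via the matching of the two classifications; (3) apply Proposition \ref{pp} for the other direction. No serious new work is needed, and the corollary can be expressed in one or two sentences after these observations.
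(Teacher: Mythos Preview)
Your proposal is correct and follows essentially the same route as the paper: the backward direction via \cite[Corollary 5.6]{Bal05} together with Theorem \ref{thder}(1), and the forward direction via Proposition \ref{pp}. Your explicit verification that every ideal of $\dpf(X)$ is radical is a point the paper leaves implicit, so your argument is in fact slightly more complete.
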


We close this paper by giving one concrete example of spectra.
\begin{ex}
Let $k$ be a field.
First note that since $\PP^1$ is regular, there is a triangle equivalence $\db(\PP^1) \cong \dpf(\PP^1)$.
It is explained in \cite[Section 4.1]{KS} that there is a lattice isomorphism
$$
\Th(\dpf(\PP^1)) \cong \Spcl(\PP^1) \sqcup \ZZ.
$$	
Here, $\ZZ$ is considered as the discrete lattice.
On the right-hand side, an element of $\Spcl(\PP^1)$ corresponds to an ideal of $\dpf(\PP^1)$ and those of $\ZZ$ corresponds to a thick subcategory of the form $\mathrm{thick}(\cO_{\PP^1}(i))$ for some $i \in \ZZ$.
As Corollary \ref{twoprm} shows, the prime ideals and the ideals which are prime thick subcategories are the same.
Therefore, restricting the above lattice isomorphism to prime thick subcategories, we get a homeomorphism 
$$
\tsp(\dpf(\PP^1)) \cong \PP^1 \sqcup \ZZ,
$$	
where $\ZZ$ is considered as the discrete topological space.
This gives an example of noetherian scheme which is not quasi-affine and the immersion $X \hookrightarrow \tsp(\dpf(X))$ is not a homeomorphism.
\end{ex}

\begin{ac}
The author thanks Ryo Takahashi for giving helpful comments and useful suggestions. 
The author also thanks Tsutomu Nakamura for fruitful discussions on Proposition \ref{resds}.
\end{ac}

\bibliography{ref}
\bibliographystyle{amsplain}

\if0

\fi

\end{document}